\newcommand{\M}{\mathcal M}
\newcommand{\Z}{\mathcal Z}
\newcommand{\Ph}{\mathcal P}
\newcommand{\T}{\mathcal T}
\newcommand{\Sph}{{\mathcal S}}
\newcommand{\W}{\mathcal W}
\newcommand{\C}{\mathcal C}
\newcommand{\U}{\mathcal U}
\newcommand{\Orb}{{\mathsf O}}
\newcommand{\Cst}{\mathsf{C}}
\newcommand{\Xscr}{\mathcal{X}}
\newcommand{\Dscr}{\mathcal{D}}
\def\RR{\mathbb{R}}
\def\ZZ{\mathbb{Z}}
\def\Id{\mathbb I}
\def\eps{\epsilon}
\def\Im{i}
\def\lie#1{L_{#1}}
\def\chiph{\chi^{\phantom s}}
\def\gt{>}
\def\lt{<}
\def\le{\leq}
\def\ge{\geq}
\def\Chi{\Xscr}
\newcommand{\tond}[1]{{\left(#1\right)}}
\newcommand{\quadr}[1]{{\left[#1\right]}}
\newcommand{\inter}[1]{{\langle#1\rangle}}
\newcommand{\graff}[1]{\{#1\}}
\newcommand{\qed}{{\vskip-18pt\null\hfill$\square$\vskip6pt}}
\newcommand{\Ham}[2]{H^{(#1)}_{#2}}
\newcommand{\Poi}[2]{\{#1,#2\}}
\newcommand{\derp}[2]{{\frac{\partial #1}{\partial #2}}}
\newcommand{\ncamp}[1]{\Big\|#1\Big\|^\oplus}
\newcommand{\norm}[1]{\left\|#1\right\|}
\newtheorem{theorem}{Theorem}[section]
\newtheorem{theorem*}{Theorem}
\newtheorem{lemma}{Lemma}[section]
\newtheorem{corollary}{Corollary}[section]
\newtheorem{proposition}{Proposition}[section]
\newtheorem{definition}{Definition}[section]
\newenvironment{proof}[0]{\noindent{\bf proof:}}{$\square$\par\medskip}
\title{Long time stability of small amplitude Breathers\\
  in a mixed FPU-KG model}
\author{
  Simone Paleari\thanks{Universit\`a degli Studi di Milano,
    Dipartimento di Matematica, Via Saldini 50, 20133 Milano (Italy)}
  \footnote{{\tt simone.paleari@unimi.it}} 
  \hskip6pt and Tiziano Penati$\null^*$
  \footnote{{\tt tiziano.penati@unimi.it}}
}
\begin{document}

\maketitle

\begin{abstract}
  In the limit of small couplings in the nearest neighbor interaction,
  and small total energy, we apply the resonant normal form result of
  a previous paper of ours to a finite but arbitrarily large mixed
  Fermi-Pasta-Ulam Klein-Gordon chain, i.e. with both linear and
  nonlinear terms in both the on-site and interaction potential, with
  periodic boundary conditions.
  
  An existence and orbital stability result for Breathers of such a
  normal form, which turns out to be a generalized discrete Nonlinear
  Schr\"odinger model with exponentially decaying all neighbor
  interactions, is first proved.

  Exploiting such a result as an intermediate step, a long time
  stability theorem for the true Breathers of the KG and FPU-KG
  models, in the anti-continuous limit, is proven.
\end{abstract}


\section{Introduction and statement of the results}
\label{s:1}

We consider a mixed Fermi-Pasta-Ulam Klein-Gordon model (FPU-KG) as
described by the following Hamiltonian
\begin{align}
  \label{e.H}
  H(x,y) &= \frac12\sum_{j=1}^N \left[ y^2_j + x^2_j + a(x_{j+1}-x_j)^2
    \right] + \frac14\sum_{j=1}^N\quadr{x_j^4 +
    {b}(x_{j+1}-x_j)^4}\ ,\\  x_0 &= x_N \ , \qquad y_0 = y_N \ ,
\end{align}
i.e. a finite chain of $N$ degrees of freedom and periodic boundary
conditions, where $a>0$ and $b\geq 0$ are the linear and nonlinear
coupling coefficients. It can be remarked that the classical KG model
($a\neq0$, $b=0$) is included as a particular case, but the pure FPU
one is clearly not covered\footnote{We can't include the FPU case
  (which corresponds to $a\neq0$ and $b\neq0$, without the on-site
  potentials $x_j^2$ and $x_j^4$) since the normal form construction
  we rely on does not apply for such a model; moreover, in the present
  context of an application to the stability of Breather solutions,
  the discussion of such an extension could be pointless since in many
  FPU-like models, like in~\cite{JamKC13} no fundamental Breathers
  exist.}.

According to a previous result of ours, for any $r\geq 1$, provided
the coupling parameters $a$ and $b$ are correspondingly small enough,
there exists a canonical transformation $T_\Chi$ which puts the
Hamiltonian \eqref{e.H} into an extensive resonant normal form of
order $r$
\begin{equation*}
    \Ham{r}{} = H_\Omega + \Z + {P^{(r+1)}}\ ,
    \qquad\qquad
    \{H_\Omega,\Z\}=0 \ .
\end{equation*}
with $H_\Omega$ a system of $N$ identical oscillators whose frequency
$\Omega$ turns out to be the average of the linear spectrum of the
original Hamiltonian, $\Z$ a non-homogeneous polynomials of order
$2r+2$, $P^{(r+1)}$ a remainder of order at least $2r+4$ (see Theorem
\ref{prop.gen} in Section~\ref{s:2}). Strictly speaking, the original
statement is formulated in the case $b=0$, see~\cite{PalP14}, but
holds also for $b\neq0$ and small.

The above normal form was indeed shown in~\cite{PalP14} to be well
defined in a small ball $B_R(0)$ both in euclidean and in supremum
norm, i.e. both in a regime of finite total and respectively
specific\footnote{{We recall that, given $E=H(x,y)$ the total energy,
    the specific energy is the average energy per degree of freedom
    $E/N$.}} energy; one of the key points was indeed to consider
finite but arbitrarily large systems (along a direction we followed
also, e.g., in~\cite{PP12}), with estimates uniform in the size of the
chain, hence valid in the limit $N\to+\infty$. However, only in the
case of the euclidean norm the almost invariance of $H_\Omega$ over
times $|t|\sim R^{-r-1}$ was granted, due to the equivalence between
$H_\Omega$ and the selected norm. Moreover, looking at the structure
of $\Z$, the normal form $H_\Omega+\Z$ appears as a generalized
discrete nonlinear Schr\"odinger (GdNLS\footnote{Recently several
  works appeared on GdNLS models with more than first neighbor
  interactions, like~\cite{KouKCR13,ChoCMK11}, or with higher
  nonlinearities, like~\cite{ChoP11,CarTCM06}, where spatially
  localized periodic orbits, like breathers or multibreathers, are
  studied. {Please remark that we here use the term ``generalized''
    exactly to indicate {\sl a generalization}, without any particular
    reference to other ``generalized'' dNLS.}})  chain: it is
characterized by all neighbors couplings, with exponential decay of
the coefficients with the distance between sites, both in the linear
and nonlinear terms, the last ones being of order $2r+1\geq 3$.  Since
the Hamiltonian of such a normal form is given by an expansion both in
energy, through the degree of the polynomials, and in coupling, it is
actually cumbersome and somewhat useless to give here a complete and
explicit formulation; the following are the leading terms, in the
transformed variables $(\tilde x,\tilde y)$
  \begin{align*}
    H_{\rm GdNLS} = 
    & \sum_{j=1}^N \bigg[\frac{\Omega}2 ( {\tilde y}_j^2 + {\tilde x}_j^2 ) \; + 
      && H_\Omega
\\
    +\; &  \mathcal{O}(c)
      ({\tilde x}_j{\tilde x}_{j+1} + {\tilde y}_j{\tilde y}_{j+1}) +
    \mathcal{O}(c^2)
      ({\tilde x}_j{\tilde x}_{j+2} + {\tilde y}_j{\tilde y}_{j+2}) +
    \mathcal{O}(c^3) \; +
    &&\Z\text{: quadratic part} 
\\ 
    +\; & \mathcal{O}(c^0) ({\tilde x}_j^2+{\tilde y}_j^2)^2
    + \mathcal{O}(c)
      ({\tilde x}_j^2+{\tilde y}_j^2)({\tilde x}_j{\tilde x}_{j\pm 1}+
       {\tilde y}_j{\tilde y}_{j\pm 1})
    + \mathcal{O}(c^2) \Big]  +
    &&\Z\text{: quartic part}
\\ 
    +\; &\ldots   &&\Z\text{: higher orders}
\end{align*}
where we have introduced the collective coupling constant
\begin{equation}
  \label{e.c}
  c:=\max\{a,b\} \ .
\end{equation}
If one truncates the above Hamiltonian, using only $H_\Omega$ and the
first term of both the quadratic and quartic part of $\Z$, it is
possible to recognize the usual dNLS, here written in real
coordinates.

In this work we exploit the invariance of $H_\Omega$ in the above
resonant normal form part, in order to get some stability results
about true or approximated Breather solution in the model
\eqref{e.H}. The results we are going to present hold in the small
total energy $E<E_*(r)$ regime and for $c<c_*(E,r)$ small enough,
hence in the anti-continuous limit.

For a more precise formulation, let us give some notation. We denote
with $\Ph$ the phase space $\RR^{2N}$ endowed by the usual euclidean
norm $\norm{z}$, where $z=(x,y)$ is the generic element of $\Ph$.
Given $z\in\Ph$, let us also denote by ${\Orb}(z)$ the orbit through
$z$. We also need to introduce a suitable ``orbital'' distance. We use
the Hausdorff distance $d_H$, which is a metric once restricted to the
subset of non empty and compact sets: since we consider periodic
orbits and subsets of orbits parametrized by a closed interval of
time, $d_H$ satisfies all the relevant properties we need. We recall
the definition: given two subsets $A$ and $B$ of $\Ph$,
\begin{equation}
  \label{e.dist.def}
  \begin{aligned}
    d(A,B) &:= \sup_{a\in A}\inf_{b\in B}\norm{a - b} \ ,
  \\
    d_H(A,B) &:= \max\{d(A,B),d(B,A)\} \ .
  \end{aligned}
\end{equation}

Let us denote\footnote{We will follow the following convention to
  denote profiles, and consequently orbits via the symbol
  ${\Orb}(\cdot)$:
\begin{align*}
\phi &&& \text{generic profile for FPU-KG}
     &&&&&&&& z=(x,y) && \text{original coordinates}
\\
\Psi &&& \text{Breather profile for FPU-KG}
     &&&&&&&& \tilde z=(\tilde x,\tilde y) && \text{transformed coordinates}
\\
\psi &&& \text{Breather profile for GdNLS}
     &&&&&&&& \tilde \psi && \text{objects in transformed coordinates}
\end{align*}
See also \eqref{e.inv.psi.ab} for the relation between an object in
original coordinates and in transformed ones.} by $\Psi_{a,b}$ and
${\Orb}(\Psi_{a,b})$ respectively the Breather initial profile and its
orbit for our FPU-KG model~\eqref{e.H}. The existence of such an
object in the anti-continuous limit (i.e. as a family in $(a,b)$
emerging from the trivial one-site excitation solution available when
$(a,b)=(0,0)$) has been obtained originally in \cite{MacA94} (strictly
speaking in the case $b=0$).

In the formulation below of our result, the long stability time
$T_{\eps,r,R}$ (see~\eqref{e.times}) scales as
\begin{equation}
\label{e.appr.tscale}
T_{\eps,r,R} \simeq \eps^2 {\frac{(Rr)^{-2r}}{R^4}} \ ,
\end{equation}
where $r$ is the aforementioned normal form order, $R$ control the
small energy of the objects involved, and $\eps$, sufficiently smaller
than $R$, is the (tunable part of the) radius of the stability
neighborhood.

Finally, to unambiguously fix the two-parameter family $\Psi_{a,b}$,
we require it to emerge, in the anti-continuous limit, from the
single-site oscillator with prescribed amplitude
$\norm{\Psi_{0}}=R/6$, where here and in the following, we will use a
single sub-scripted $0$ to indicate the values $(a,b) = (0,0)$
(see~\eqref{e.def.psi0} and~\eqref{e.def.Psi0} for explicit
definitions).


\begin{theorem}
\label{t.b.kg}
Fix an arbitrary integer $r\geq1$. Then there exists $R_*(r)<1$ such that
for all $R<R_*$ and $0<\eps \ll R^2$ there exist $c_*(r,R,\eps)$ and
$\delta(\eps)$, such that for all $c<c_*$ the (piece of) orbit
${\Orb}(\phi):=\{\phi(t) \ :\ |t| \leq T_{\eps,r,R} \,,\ \phi(0)=\phi
\}$, solution of~\eqref{e.H}, satisfies
\begin{equation}
\label{e.main}
  \norm{\phi-\Psi_{a,b}}<\delta
  \quad\Longrightarrow\quad
  d_H\tond{{\Orb}(\phi),{\Orb}(\Psi_{a,b})} < \eps
  \ .
\end{equation}
\end{theorem}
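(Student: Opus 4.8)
\noindent
The plan is to transport the problem, via the normal-form map $T_\Chi$ of Theorem~\ref{prop.gen}, into the GdNLS picture, where the fundamental Breather $\psi_{a,b}$ of $H_\Omega+\Z$ is a relative equilibrium for the gauge rotation generated by $H_\Omega$, and then to exploit the near-conservation of the two functionals $H_\Omega$ and $H_\Omega+\Z$ along the true flow of $\Ham{r}{}$ in order to trap the piece of orbit of $\phi$ near $\Orb(\Psi_{a,b})$ for the whole time $T_{\eps,r,R}$. First I would pass to transformed coordinates: on $B_R(0)$ the map $T_\Chi$ is $\mathcal{O}(c)$-close to the identity, hence bi-Lipschitz there with constants $1+\mathcal{O}(c)$, so by \eqref{e.inv.psi.ab} and the bi-Lipschitz bound it suffices to work with $\tilde\phi:=T_\Chi^{-1}\phi$ (which solves $\Ham{r}{}=H_\Omega+\Z+P^{(r+1)}$), with the periodic orbit $\tilde\Psi_{a,b}:=T_\Chi^{-1}\Psi_{a,b}$ of $\Ham{r}{}$, and with the GdNLS Breather $\psi_{a,b}$ furnished by our existence/orbital-stability result for the normal form. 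Since, by the MacKay--Aubry continuation and by that same existence result, both $\tilde\Psi_{a,b}$ and $\psi_{a,b}$ converge as $c\to0$ to the common anti-continuous profile of amplitude $R/6$, and since $\Ham{r}{}$ and $H_\Omega+\Z$ differ only by the small remainder $P^{(r+1)}$, choosing $c_*$ small makes $d_H\bigl(\Orb(\tilde\Psi_{a,b}),\Orb(\psi_{a,b})\bigr)<\eps/4$, where this last orbit is taken with respect to the GdNLS flow. It therefore remains to show that $\norm{\tilde\phi(0)-\psi_{a,b}}<\delta'$ implies $d_H\bigl(\{\tilde\phi(t):|t|\le T_{\eps,r,R}\},\Orb(\psi_{a,b})\bigr)<\eps/2$.

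The second, and central, step would use the Lyapunov structure underlying the GdNLS orbital-stability result: a small Lagrange multiplier $\lambda=\lambda(a,b)=\mathcal{O}(R^2)$ (the nonlinear frequency shift) for which $\psi:=\psi_{a,b}$ is a critical point of $\Lambda:=\Z-\lambda H_\Omega$, with $\Orb(\psi)$ equal to the closed gauge orbit $\{g_\theta\psi:\theta\in[0,2\pi)\}$ of the $H_\Omega$-rotation, together with the coercivity estimate
\begin{equation*}
  \Lambda(\xi)-\Lambda(\psi)\ \ge\ \kappa\,\mathrm{dist}\bigl(\xi,\Orb(\psi)\bigr)^2
  \qquad\text{for }\xi\text{ in an }\eps\text{-tube around }\Orb(\psi)\text{ inside }B_{R/3}(0),
\end{equation*}
with $\kappa=\kappa(r,R)\sim R^2$ encoding the non-degeneracy of $\psi$, mod the gauge orbit, as a constrained critical point of $\Z$ on the level set $\{H_\Omega=H_\Omega(\psi)\}$; all of this is uniform for $c$ small. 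Along the true dynamics, since $\{H_\Omega,\Z\}=0$ one has
\[
  \frac{d}{dt}\Lambda(\tilde\phi(t))=\{\Lambda,P^{(r+1)}\}(\tilde\phi(t)),\qquad
  \frac{d}{dt}H_\Omega(\tilde\phi(t))=\{H_\Omega,P^{(r+1)}\}(\tilde\phi(t)),
\]
and the remainder bounds of Theorem~\ref{prop.gen} on $B_{R/3}(0)$ control the first right-hand side by $\sim(rR)^{2r}R^6$ and the second by $\sim(rR)^{2r}R^4$. Thus the scaling $T_{\eps,r,R}\simeq\eps^2(rR)^{-2r}R^{-4}$ of \eqref{e.appr.tscale} is exactly the one for which the accumulated drift of $\Lambda$ over $[-T_{\eps,r,R},T_{\eps,r,R}]$ stays below the order $\kappa\eps^2\sim R^2\eps^2$ of the coercivity gap, while at the same time $H_\Omega$, hence $\norm{\tilde\phi}$, moves only by $\mathcal{O}(\eps^2)$.

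The third step is a continuity/bootstrap argument. Fix $\delta=\delta(\eps)$ small enough that $\norm{\tilde\phi(0)-\psi}<\delta'$ forces $\Lambda(\tilde\phi(0))-\Lambda(\psi)\le\tfrac14\kappa\eps^2$ and $|H_\Omega(\tilde\phi(0))-H_\Omega(\psi)|$ small; let $I\subset[0,T_{\eps,r,R}]$ be the set of times on which $\tilde\phi(\cdot)\in B_{R/3}(0)$ and $\mathrm{dist}(\tilde\phi(\cdot),\Orb(\psi))<\eps/2$. On $I$ the estimates of the previous step give $\Lambda(\tilde\phi(t))-\Lambda(\psi)\le\tfrac12\kappa\eps^2$ and keep $H_\Omega(\tilde\phi(t))$ within $\mathcal{O}(\eps^2)$ of $H_\Omega(\psi)$, so the coercivity estimate yields $\mathrm{dist}(\tilde\phi(t),\Orb(\psi))\le\eps/3$ and, because $\eps\ll R^2$, $\norm{\tilde\phi(t)}\le R/6+\mathcal{O}(c)+\eps/3<R/3$. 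Hence $I$ is relatively open and closed in $[0,T_{\eps,r,R}]$, so $I=[0,T_{\eps,r,R}]$, and the same for negative times: this controls one Hausdorff direction, $d(\{\tilde\phi(t):|t|\le T_{\eps,r,R}\},\Orb(\psi))\le\eps/3$.

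For the reverse inequality, $\tilde\phi$ remains in a thin tube around the circle $\{g_\theta\psi\}$ and its gauge phase advances at the essentially constant nonzero rate $\omega\sim\Omega$ (the Breather frequency), up to corrections that stay small over $[-T_{\eps,r,R},T_{\eps,r,R}]$; hence, in the regime where $T_{\eps,r,R}$ is a genuine long-time scale (exceeding the Breather period), the curve $\{\tilde\phi(t):|t|\le T_{\eps,r,R}\}$ sweeps the whole circle up to $\eps/3$, so $d(\Orb(\psi),\{\tilde\phi(t):|t|\le T_{\eps,r,R}\})\le\eps/3$. Combining the two directions with the $\eps/4$ from the reduction and undoing $T_\Chi$ (with $c_*$ small) gives $d_H(\Orb(\phi),\Orb(\Psi_{a,b}))<\eps$, i.e.\ \eqref{e.main}. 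I expect the main obstacle to be the quantitative matching in the second step: one must weigh the explicit $r$- and $R$-dependence of the remainder $P^{(r+1)}$ from Theorem~\ref{prop.gen} against the coercivity constant $\kappa(r,R)$ of $\Lambda$ --- the latter requiring, uniformly for small $c$, a non-degeneracy estimate for the fundamental GdNLS Breather as a constrained critical point --- and then to check that the bootstrap tube never leaves the ball $B_R(0)$ on which the normal form is defined.
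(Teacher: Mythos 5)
Your proposal is correct and follows essentially the same route as the paper: pass to normal-form coordinates, establish existence and coercive orbital stability of the GdNLS Breather, convert the drift of the almost-conserved quantities $H_\Omega$ and $\Z$ under the remainder $P^{(r+1)}$ into the time scale $T_{\eps,r,R}$, and finish by triangulating in the Hausdorff metric through the approximate Breather $\Orb(\psi_{a,b})$, using that $\Psi_{a,b}$ and $\psi_{a,b}$ emerge from the same anti-continuous profile. The only point where the paper is slightly more careful is the closeness $d_H\tond{\Orb(\Psi_{a,b}),\Orb(\psi_{a,b})}<\eps/2$: rather than invoking bare continuity in $c$, it applies the stability statement a second time with $\Psi_{a,b}$ as initial datum, after making $\norm{\Psi_{a,b}-\psi_{a,b}}\lesssim\mu$ smaller than $\delta(\eps/2)$ --- which is exactly where $c_*$ acquires its $\eps$-dependence.
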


A first comment on the above statement pertains the coupling threshold
$c_*$ and its dependence on the relevant parameters. It depends on $r$
because of the normal form construction: the larger the transformation
steps number required, the smaller the perturbation parameter,
i.e. the coupling. The dependence from $R$ comes both from the normal
form procedure, when we need to control the size of the transformation
domains and the smallness of the remainder, and from the existence of
Breathers solutions of the GdNLS (this will be shown to be a necessary
intermediate step). The $\eps$ dependence appears instead in the last
part of the proof, when the distance between the Breather of the full
system and that of the normal form must be controlled.

Let us add some more details. As we said, our proof of
Theorem~\ref{t.b.kg} is based on the long time stability result of
Theorem~\ref{p.orb.contr}. We indeed first show the expected existence
and stability of a Breather for the normal form (GdNLS), respectively
by a continuation from the anti-continuous limit and exploiting the
second conserved quantity of the GdNLS. Let us denote by
${\Orb}(\psi_{a,b})$ the orbit of such a Breather, emerging from the
same single-site oscillator $\Psi_0$ introduced above. We remark that
the closed trajectory ${\Orb}(\psi_{a,b})$ represents an approximated
solution for~\eqref{e.H}.  We then use the small remainder given by
the normal form transformation to translate the infinite time
stability of the GdNLS dynamics around the GdNLS Breather
${\Orb}(\psi_{a,b})$ into a long time stability of the FPU-KG dynamics
around the same object. This concludes the sketch of the proof of
Theorem~\ref{p.orb.contr}, where a stability control of the FPU-KG
dynamics can be obtained in the form
\begin{equation*}
  \norm{\phi-\psi_{a,b}}<\delta
  \quad\Longrightarrow\quad
  d_H\tond{{\Orb}(\phi),{\Orb}(\psi_{a,b})} < \eps
\ ,
\end{equation*}
for $|t|\lesssim T_{\eps,r,R}$ and $c<c_*$, in this case with
$c_*(r,R)$ independent of $\eps$. Thus, at fixed $r$ and $R$, one can
play\footnote{However, in order to get a meaningful result, $\eps$
  shouldn't be taken too small: otherwise the stability time
  $T_{\eps,r,R}$, which scales as $\eps^2$, could fall shorter than
  the period of the (true/approximated) Breather.}  with $\eps$ to
strengthen the stability control without further requirements on the
couplings.

To get the result of Theorem~\ref{t.b.kg} one eventually exploits the
closeness of the FPU-KG Breather $\Orb(\Psi_{a,b})$ to the GdNLS
Breather $\Orb(\psi_{a,b})$: both the objects emerge in the
anti-continuous limit from the same configuration $\Psi_{0}$, thus
using the continuity in their (common) continuation parameter $c$ one
gets a (weak) closeness of order $\sqrt[4]{c}$. Here enters the
dependence of $c_*$ also on $\eps$: this is needed in order to ensure
that the true Breather configuration $\Psi_{a,b}$ lies well within the
stability basin of the approximated Breather orbit
$\Orb(\psi_{a,b})$. Furthermore, in order to transfer the stability of
$\Orb(\psi_{a,b})$ to the stability of $\Orb(\Psi_{a,b})$ the
triangular inequality of the Hausdorff metric $d_H$ is also needed.

We would like to stress that our result resembles, in its formulation
and strategy, ~Theorem~2.1 of~\cite{Bam96}, which is the first, and
actually one of the few, result of long time stability of Breathers
for weakly coupled oscillators (see also \cite{Bam98}): indeed,
although Nekhoroshev-type stability was expected since the earliest
papers (see, e.g. \cite{MacA94}), most of the literature on the
stability of Breathers (and of their multi-site generalizations,
called Multibreathers) deals with the linear stability (see
\cite{Aub97,PelKF05,PelS12,Yos12}).

There are however some differences with \cite{Bam96}, that we would
like to underline here. The first one is that in \cite{Bam96} the
closeness to the Breather solutions was obtained with a ``local''
normal form around a generic an-harmonic oscillator (the system being
infinite), using only the linear coupling $a$ as small parameter
(since it treats the model \eqref{e.H} with $b=0$). As a consequence,
it is valid also for arbitrary large amplitudes and not only in the
small energy regime, like Theorem~\ref{t.b.kg}. Our normal form is
instead ``more global'', in the sense that it holds in a whole
neighborhood of the origin. Hence, within its limit of validity given
by the smallness of the energy, it can be used to capture the main
features of any Cauchy Problem.

Moreover, and differently from our Theorem~\ref{t.b.kg}, in
\cite{Bam96} the small parameter $a$ is used also in order to fix the
domain of stability: indeed, in~\cite{Bam96}, the corresponding of our
radius $\delta$ of the stability basin vanishes as ${a}\to 0$. This is
a consequence of the way the ``local'' normal form Theorem
(ref. Theorem 4.1 in \cite{Bam96}) has been used, choosing $\sqrt{a}$
as the size of the domain of validity, and it seems in contrast with
the intuition that by approaching the uncoupled system ($a=0$ in that
case, $c=0$ in the present one), the Breather should be increasingly
stable, not only in terms of time scale but also in terms of
domain. With respect to this aspect, our result is more flexible: as
already pointed, at fixed time scale (i.e. fixing $r$ and $E$) we are
allowed to arbitrarily decrease the coupling $a$ without shrinking the
stability basin.

Concerning instead the dependence on the coupling of the stability
time scale, the result in~\cite{Bam96} appears to be as strong as one
could hope, i.e. one has an exponential dependence of the form $T_a
\simeq \exp(a^{-1/6})$. Our result, on the contrary, seems to fail
completely in the expected growth of the time scale as the couplings
vanish, since neither $a$ nor $b$ appear explicitly
in~\eqref{e.appr.tscale}. However our result is indeed somewhat
similar once the implicit dependence on the couplings is taken into
account: the formulation of Theorem~\ref{t.b.kg} provides a stability
time $T_{\eps,r,R}$ which scales as a power of $(Rr)^{-1}$, which is
large provided the ``amplitude'' $R$ is sufficiently small with
respect to $1/r$ (see also condition~\eqref{e.R.sm1}), with an
exponent $r$ which can be arbitrarily increased by sufficiently
decreasing the coupling $c$. In the parameters plane $(r,c)$, the
allowed region has a border\footnote{We should also remark that
  different choices for $\sigma_1$ and $\sigma_*$, respectively right
  before and right after the statement of Theorem~\ref{prop.gen},
  could lead to a boundary of the form $cr^\alpha=\text{const}$, with
  $\alpha>1$. This could further improve the time scale dependence on
  $c$, at the price of lowering the thresholds of validity for the
  relevant parameters (see \cite{PalP14}, Section~4.2 for further
  details); we did not pursue an optimization of this type.}  roughly
described by $cr^4=\text{const}$. Thus one can either formulate the
statement, as we do, assuming an arbitrary $r$, provided $c$ is
smaller than something scaling as $1/r^4$; or one could fix $c$
(sufficiently small for independent reasons) and let $r$ up to
$1/\sqrt[4]{c}$. In the latter case, provided $R$ vanishes at least as
$\sqrt[4]{c}$, the stability time scale resemble very closely the
exponential one of~\cite{Bam96}. The price to be paid is that, the
smaller is the amplitude $R$, the smaller has to be the stability
domain parameter $\eps$.

There is a last comment in the comparison of our results with the
reference paper~\cite{Bam96}.  The stability in \cite{Bam96}, as we
said, is obtained through a normal form around the an-harmonic
oscillator which is going to constitute the core of the Breather,
actually by removing the dominant part of the coupling of such an
oscillator $(J,\varphi)$ with the rest of the chain: this typically
requires a Diophantine non resonance condition for the true frequency
$\omega(J)$ of the Breather with respect to its linear frequency
$\omega_0$. However, the smaller is $J$, the closer is $\omega(J)$ to
$\omega_0$ and thus proportionally smaller must be the parameter $\nu$
in the Diophantine condition $|k_1\omega+k_2\omega_0|\geq
\nu/|k|^2$. And this affects the small coupling interval $(0,a_*)$ for
which the result in \cite{Bam96} applies: indeed, since the normal
form construction needs $\sqrt{a}/\nu<1$ in order to be performed, the
threshold $a_*$ has to decrease at least like $\nu^2$, which means
$a_*\lesssim R^4$ in terms of small amplitude $R$. Our result is
instead completely free of any Diophantine condition on the Breather
frequency, implicitly requiring only non-resonance and non-degeneracy
of the frequency in order to have the existence of the Breather.  And
we stress that, even though we also require the coupling $c$ (and then
$a$) to be small enough with respect to the amplitude, as a sufficient
condition for the variational continuation from the anti-continuous
limit, our smallness condition is weaker, being of the order $c_*\ll
R^2$.

We conclude this Introduction by remarking that, since our strategy is
strongly based on a normal form construction for the quadratic part of
the Hamiltonian \eqref{e.H} (see also \cite{GioPP12,GioPP13}), it can
be applied also to different local nonlinearities, like for example
the Morse or the cubic potential in the DNA models
\cite{PeyB89,DauPW92}. Indeed, even the FPU-KG model presented here is
an easy extension with respect to the classical KG one, and we
included it here both to give a more general result and because we
were motivated by recent papers like~\cite{KarSKC13,Yos12}, where a
nonlinear quartic interaction is taken into account. Moreover, the
perturbation approach we exploited here, even simplified in its
preliminary step involving the quadratic part, can be applied to those
model where the coupling is purely nonlinear ($a=0$), thus justifying
the long time stability of compact-like Breathers (see
\cite{TchR99,RosS05}).

The paper is organized as follows. In Section~\ref{s:2} we reformulate
the normal form result (and the main ideas related) discussed in
\cite{PalP14}. In Section~\ref{s:3} we present and comment the two
results concerning the long time orbital stability of the approximated
and true breather solution.  A short Appendix contains the proofs of
the existence and stability of the GdNLS breather.

%
%
%
%

\section{Background:\! an extensive resonant normal form Theorem}
\label{s:2}

The aim of this Section is to present the resonant normal form Theorem
obtained in \cite{PalP14}, with a slightly different formulation which
is necessary to deduce Theorem \ref{t.b.kg}. At variance with respect
to the original paper, we here decided to select $r$, the order of the
normal form, as the main parameter used to express the thresholds of
validity of the construction, instead of the small couplings. Such a
different choice fixes the order of the normal form, hence its non
linear terms, leaving the small couplings as ``free'' parameters for
the continuation procedure from the anti-continuous limit. For the
above reasons, and in order to introduce some definitions and remarks
necessary for the comprehension of the perturbation part, in this
Section we also repeat, and slightly extend, some ingredients of
\cite{PalP14}.

\subsection{Extensivity}
\label{ss:form}

The perturbation construction developed in \cite{GioPP13,PalP14} is
strongly based on the property of extensivity typical of a class of
Hamiltonian like \eqref{e.H}: physically speaking, in all these models
the extensivity results from both the translation invariance and the
short interaction potentials. In particular, the extensivity allows to
sharply manage the dependence on the size of the system $N$ in the
estimates involved in the perturbation approach. We here recall a
possible formalization of this property, by means of the cyclic
symmetry, which has been already introduced, widely analyzed and then
exploited in \cite{GioPP12,GioPP13,PalP14}.

We denote by $x_j,\,y_j$ the position and the momentum of a particle,
with $x_{j+N} = x_j$ and $y_{j+N}=y_j$ for any $j$.

\paragraph{Cyclic symmetry. }
\label{p:cyclic}

We formalize the translation invariance by using the idea of
\emph{cyclic symmetry}. The \emph{cyclic permutation} operator $\tau$,
acting separately on the variables $x$ and $y$, is defined as
\begin{equation}
\label{e.perm}
\tau(x_1,\ldots,x_N) = (x_2,\ldots,x_N,x_1)\ ,\quad
\tau(y_1,\ldots,y_N) = (y_2,\ldots,y_N,y_1)\ .
\end{equation}
We extend its action on the space of functions as
\begin{equation*}
\bigl(\tau f\bigr)(x,y) = f(\tau(x,y)) = f(\tau x,\tau y) \ .
\end{equation*}

\begin{definition}
\label{d.cs}
We say that a function $F$ is \emph{cyclically symmetric} if
$\tau F = F$.
\end{definition}

We introduce now an operator, indicated by an upper index $\oplus$,
acting on functions: given a function $f$, a new function $F=
f^{\oplus}$ is constructed as
\begin{equation}
  F= f^{\oplus} := \sum_{l=1}^{N} \tau^l f \ .
\label{e.cycl-fun}
\end{equation}
We shall say that $f^{\oplus}(x,y)$ is generated by the \emph{seed}
$f(x,y)$.
We shall often use the convention of denoting cyclically symmetric
functions with capital letters and their seeds with the corresponding
lower case letter.

\paragraph{Polynomial norms. }
\label{p:polinorms}

Let $f(x,y)=\sum_{|j|+|k|=s} f_{j,k} x^j y^k$ be a homogeneous
polynomial of degree $s$ in $x,\,y$.  Given a positive $R$, we define
its polynomial norm as
\begin{equation}
\label{e.polinorm}
\|f\|_R := R^s \sum_{j,k} |f_{j,k}|\ .
\end{equation}

\paragraph{Norm of an extensive function.}
\label{p:norm-ext}

Assume now that we are equipped with a norm for our functions
$\norm{\cdot}$, e.g. the above defined polynomial norm. We introduce a
corresponding norm $\norm{\cdot}^\oplus$ for an extensive function
$F=f^\oplus$ by defining
\begin{equation}
\label{e.norm-germ}
\bigl\|F\bigr\|^{\oplus} = \|f\|\ ,
\end{equation}
i.e. by actually measuring the norm of the seed. Although the norm so
defined depends explicitly on the choice of the seed, this is harmless
in the perturbation estimates since
\begin{displaymath}
\|F\| \le N \bigl\|F\bigr\|^{\oplus} = N \norm{f}\ ,
\end{displaymath}
for any $f$ such that $F=f^\oplus$.

\paragraph{Circulant matrices.}
\label{p:circulant}

Dealing with particular functions which are quadratic forms, the
cyclic symmetry coming from extensivity assumes a particular form.
Let us thus restrict our attention to the harmonic part of the
Hamiltonian: it is a quadratic form represented by a matrix $A$
\begin{equation}
\label{e.Aintro}
H_0(x,y) = \frac12 y\cdot y + \frac12 Ax\cdot x.
\end{equation}
If the Hamiltonian $H$ is extensive, then $H_0=h_0^\oplus$.  This
implies that $A$ commutes with the matrix $\tau$ representing the
cyclic permutation~\eqref{e.perm}
\begin{equation}
\label{e.tau}
\tau_{ij}=
\begin{cases}
1\quad {\rm if}\ i=j+1\>({\rm mod}\,N)\>,\\
0\quad \rm{otherwise}.
\end{cases}
\end{equation}
We remark that the matrix $\tau$ is orthogonal and generates a cyclic
group of order $N$ with respect to the matrix product.

%

In our problem the cyclic symmetry of the Hamiltonian implies that the
matrix $A$ of the quadratic form is circulant. Obviously it is also
symmetric, so that the space of matrices of interest to us has
dimension~$\quadr{\frac{N}2}+1$. Indeed, a circulant and symmetric
matrix is completely determined by $\quadr{\frac{N}2}+1$ elements of
its first line.

\def\supp{\mathop{\rm supp}}
\def\diam{\mathop{\rm diam}}
\def\corsivo#1{{\sl #1}}

\paragraph{Interaction range}
\label{ss.int.range}

We give here a formal characterization of finite and short range
interaction, pointing out some properties that will be useful in the
rest of the paper. We restrict our analysis to the set of polynomial
functions.  We start with some definitions. Let us label the variables
as $x_l,y_l$ with $l\in\ZZ$, and consider a monomial $x^jy^k$ (in
multiindex notation).
\begin{definition}
We define the \emph{support} $S(x^jy^k)$ of the monomial and the
\emph{interaction distance} $\ell(x^jy^k)$ as follows: considering the
exponents $(j,k)$ we set
\begin{equation}
\label{e.supp}
S(x^jy^k) = \{l\>:\>j_l\neq0 {\rm\ or\ } k_l\neq0 \}\ ,\quad
\ell(x^jy^k) = \diam\bigl(S(x^jy^k)\bigr) \ .
\end{equation}
We say that the monomial is \emph{left aligned} in case
$S(x^jy^k)\subset \{0,\ldots,\ell(x^jy^k)-1\}$.
\end{definition}
The definition above is extended to a homogeneous polynomial $f$ by
saying that $S(f)$ is the union of the supports of all the monomials
in $f$, and that $f$ is left aligned if all its monomials are left
aligned.  The relevant property is that if $\tilde f$ is a seed of a
cyclically symmetric function $F$, then there exists also a left
aligned seed $f$ of the same function $F$: just left align all the
monomials in $\tilde f$.

\paragraph{Short range (exponential decay of) interaction.}
\label{p:shortrange}
For the seed $f$ of a function consider the decomposition
\begin{equation}
\label{e.decomp}
f(z) =  \sum_{m\ge 0} f^{(m)}(z)\ ,\quad
 f^{(m)}(z) = \sum_{\ell(k)\le m} f_k z^k\ ,
\end{equation}
assuming that every $f^{(m)}$ is left aligned.
\begin{definition}
\label{d.class}
The seed $f$ (of an extensive function) is said to be of class
$\Dscr(C_f,\sigma)$ if
\begin{equation}
\label{dcdm.5}
\norm{f^{(m)}}_1 \le C_f e^{-\sigma m}\ ,\quad C_f\gt 0\,,\> \sigma\gt 0\ .
\end{equation}
\end{definition}

\paragraph{Continuity of extensive polynomials.} We add here some
regularity properties, which are absent in \cite{PalP14}.

\begin{lemma}
\label{l.cont.pol}
Any $F=f^\oplus$, polynomial of degree $m$, with
$f\in\Dscr(C_f,\sigma)$ is of class ${\cal C}^m(\RR^{2N},\RR)$, with
\begin{equation}
\label{e.cont.pol}
|F(z)| \leq \norm{F}^\oplus\norm{z}^m\ .
\end{equation}
\end{lemma}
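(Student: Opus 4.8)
The plan is to prove the bound \eqref{e.cont.pol} first, and then read off the regularity as a consequence. Fix $z\in\RR^{2N}$ and decompose the seed as $f=\sum_{m'\ge 0} f^{(m')}$ as in \eqref{e.decomp}, noting that since $F$ has degree $m$ the sum is actually finite (but the decomposition by interaction distance is what matters). The key observation is that $F=f^\oplus=\sum_{l=1}^N\tau^l f$, so $F(z)=\sum_{l=1}^N f(\tau^l z)$; hence I need to control $\sum_{l=1}^N |f(\tau^l z)|$ in terms of the seed norm. First I would estimate $|f^{(m')}(z)|$ for a single left-aligned homogeneous piece: writing $f^{(m')}=\sum_{\ell(k)\le m'} f_k z^k$, each monomial satisfies $|z^k|\le\norm{z}^{|k|}$ (degree of the monomial), so $|f^{(m')}(z)|\le \sum_k |f_k|\,\norm{z}^{\deg}\le \norm{f^{(m')}}_1$ once we factor the appropriate power of $\norm{z}$ — more precisely $\norm{f^{(m')}}_{\norm{z}}\le \norm{z}^{\deg}\norm{f^{(m')}}_1$ in the polynomial-norm notation of \eqref{e.polinorm}.

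The slightly delicate point is the bookkeeping of the cyclic sum together with the support/interaction-distance structure. The left-aligned piece $f^{(m')}$ has support contained in $\{0,\dots,m'-1\}$, so $\tau^l f^{(m')}$ has support contained in a window of length $m'$; for a fixed $z$, summing $|(\tau^l f^{(m')})(z)|$ over the $N$ translates and using that each translate is bounded by $\norm{f^{(m')}}_1\norm{z}^{\deg}$ (the polynomial norm being translation invariant, since $\tau$ merely relabels variables), one gets a factor $N$. Combined with the definition \eqref{e.norm-germ} of the extensive norm, $\norm{F}^\oplus=\norm{f}$, this would naively produce an extra $N$; the resolution is that \eqref{e.cont.pol} is stated with $\norm{F}^\oplus$ and in fact the correct reading is that a single term $f(\tau^l z)$ is bounded by $\norm{f}\,\norm{z}^m\le \norm{F}^\oplus\norm{z}^m$, and then $|F(z)|\le \sum_l |f(\tau^l z)|$. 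I would check carefully whether the intended statement has the $N$ or not — if it does, the summation over translates gives it; if not, one uses $|F(z)|\le\max_l N|f(\tau^l z)|$ absorbed differently, or the statement is really $|F(z)|/N\le\norm{F}^\oplus\norm{z}^m$. In any case the core inequality $|f(w)|\le\norm{f}\,\norm{w}^m$ for a seed of degree $m$, applied at $w=\tau^l z$ with $\norm{\tau^l z}=\norm{z}$ by orthogonality of $\tau$, is the engine.

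For the regularity claim, $F$ is a polynomial of degree $m$ in the $2N$ real variables $(x,y)$, hence trivially $C^\infty$, and in particular of class $C^m(\RR^{2N},\RR)$; no work is needed there beyond observing that $f\in\Dscr(C_f,\sigma)$ guarantees the coefficient sums are finite so that $F$ is genuinely a polynomial (each $f^{(m')}$ has finite polynomial norm by \eqref{dcdm.5}, and only finitely many contribute to a degree-$m$ object). The main obstacle, as indicated above, is purely the normalization: making sure the factor $N$ (or its absence) in \eqref{e.cont.pol} is handled consistently with the convention \eqref{e.norm-germ}, and that the homogeneous-degree versus interaction-distance decompositions are not conflated. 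Once that is pinned down, the estimate is a one-line application of the triangle inequality to the monomial expansion together with the translation invariance of the polynomial norm.
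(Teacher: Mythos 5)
Your handling of the regularity claim is fine: in finite dimension a polynomial is trivially $\mathcal{C}^\infty$, so nothing more is needed for the literal statement (the paper additionally represents $F$ by a symmetric multilinear operator to get bounds on the differentials uniform in $N$, but that is extra content). The estimate \eqref{e.cont.pol}, however, is exactly where your proposal has a genuine gap, and it sits precisely at the point you flag as ``slightly delicate'' and then fail to resolve. Your engine is the per-translate bound $|f(\tau^l z)|\le \norm{f}\,\norm{\tau^l z}^m=\norm{f}\,\norm{z}^m$; summing it over the $N$ translates can only yield $|F(z)|\le N\,\norm{F}^\oplus\norm{z}^m$. The statement contains no factor $N$, it is not a misprint, and it is not secretly about $|F(z)|/N$: the whole point of the lemma (and of the extensivity formalism, whose estimates must be uniform in $N$) is that the cyclic sum is already absorbed by the euclidean norm, which is itself a sum over the sites.

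The missing step is an interchange of the two sums followed by a H\"older-type inequality on the cyclic sum of a \emph{single} monomial. Writing $f(z)=\sum_{|k|=m}f_k z^k$ one has
\begin{displaymath}
|F(z)|\;\le\;\sum_{|k|=m}|f_k|\,\tond{\sum_{j=0}^{N-1}\bigl|z^k\circ\tau^j\bigr|}\ ,
\end{displaymath}
and the inner sum --- the sum over all $N$ translates of one monomial of degree $m$ --- is bounded by $\norm{z}^m$ \emph{without} any factor $N$. Indeed, by the generalized H\"older inequality,
\begin{displaymath}
\sum_{j=0}^{N-1}\prod_l |z_{l+j}|^{k_l}\;\le\;\prod_l\Bigl(\sum_{j}|z_{l+j}|^{m}\Bigr)^{k_l/m}
\;=\;\Bigl(\sum_{j}|z_{j}|^{m}\Bigr)^{|k|/m}\;\le\;\norm{z}^{m}
\end{displaymath}
for $m\ge 2$ (for $z^k=z_0z_1$ this is just Cauchy--Schwarz: $\sum_j|z_jz_{j+1}|\le\norm{z}^2$). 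Summing over $k$ then gives $|F(z)|\le\bigl(\sum_{|k|=m}|f_k|\bigr)\norm{z}^m=\norm{F}^\oplus\norm{z}^m$. Translation invariance of the polynomial norm, which is all you invoke, cannot produce this cancellation of $N$; without it the lemma, and the $N$-uniform estimates built on it, would not hold in the stated form.
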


\begin{proof}
Since
\begin{displaymath}
f(z) = \sum_{|k|=m} f_k z^k\ ,
\end{displaymath}
we have also
\begin{displaymath}
|F(z)| \leq \sum_{j=0}^{N-1}\sum_{|k|=m}|f_k| |z^k\circ \tau^j| \leq
\sum_{|k|=m}|f_k| \tond{\sum_{j=0}^{N-1}|z^k\circ \tau^j|} \leq
\norm{F}^\oplus\norm{z}^m\ .
\end{displaymath}
Any polynomial $F$ is represented by a symmetric ($m$) multilinear
operator $\hat F$, such that
\begin{displaymath}
\hat F(z,\ldots,z) = F(z)\ ,
\end{displaymath}
hence \eqref{e.cont.pol} gives
\begin{displaymath}
\sup_{\norm{z}\leq 1,\,z\not=0} |\hat F(z,\ldots,z)|\leq
\norm{F}^\oplus<\infty\ ,
\end{displaymath}
which is the continuity of $\hat F$. The continuity of the
differentials follows immediately from $\hat F$ being multilinear.
\end{proof}

\paragraph{Hamiltonian vector fields}

We consider, as an Hamiltonian, an extensive function $F$ with seed
$f$; we will make use of the common notation\footnote{For an easier
  notation we drop the Hamiltonian $F$ in the indexes of the
  components of the vector field.} $X_F=(X_1, \ldots,X_N,X_{N+1},
\ldots, X_{2N})$ to indicate the associated Hamiltonian vector field
$J\nabla F$, with $J$ given by the Poisson structure. The first easy,
but important, result is that also the Hamiltonian vector field
inherits, in a particular form, the cyclic symmetry; a possible choice
for the equivalent of the seed turn out to be the couple $(X_{1},
X_{N+1})$, i.e. the first and the $(N+1)^{\rm th}$ components of the
vector. This fact, which will be more clear thanks to the forthcoming
Lemma~\ref{l.seme.campo}, allows us to define in a reasonable and
consistent way the following norm
\begin{equation}
\label{e.def1}
\ncamp{X_F}_R := \norm{X_1}_R+\norm{X_{N+1}}_R \ .
\end{equation}

\begin{lemma}
\label{l.seme.campo}
Given $F=f^\oplus$, for the components of its Hamiltonian vector field
$X_F$ we have\footnote{An immediate consequence of
  \eqref{e.seme.campo} is that, defining the norm of the vector field
  as the sum of its components (i.e. a finite $\ell^1$ norm), we would
  get $\norm{X_F}_R = N\ncamp{X_F}_R$, which in turn justify the
  definition \eqref{e.def1}, and make it consistent with our previous
  definition~\eqref{e.norm-germ}.}
\begin{equation}
\label{e.seme.campo}
\begin{aligned}
 X_j &= \tau^{j-1} X_1
\cr
 X_{N+j} &= \tau^{j-1} X_{N+1}
\end{aligned}
\qquad\qquad
j=1,\ldots,N \ .
\end{equation}
Moreover, it holds
\begin{equation}
\label{e.xxx}
 \ncamp{X_F}_R = \sum_{l=1}^{2N}\norm{\derp{f}{z_l}}_R.
\end{equation}
\end{lemma}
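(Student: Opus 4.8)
The plan is to prove the two assertions separately, both exploiting the elementary interplay between the cyclic permutation $\tau$ and partial differentiation. First I would establish \eqref{e.seme.campo}. The key observation is that $\tau$ acts on functions by $(\tau g)(z) = g(\tau z)$, and that the matrix $\tau$ representing the permutation on coordinates is orthogonal, with $\tau^{-1} = \tau^\top$ sending index $j$ to $j-1 \pmod N$ (acting separately on the $x$-block and the $y$-block). Since $F = f^\oplus = \sum_{l=1}^N \tau^l f$ is cyclically symmetric, $\tau F = F$, hence $\nabla F(\tau z) \cdot \tau = \nabla(\tau F)(z) = \nabla F(z)$, i.e. $\nabla F(\tau z) = \tau \nabla F(z)$ using orthogonality of $\tau$ (and the fact that $J$ commutes with the block-diagonal action of $\tau$ on phase space, since $\tau$ acts identically on positions and momenta). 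Therefore $X_F(\tau z) = J\nabla F(\tau z) = \tau J \nabla F(z) = \tau X_F(z)$. Reading this component-wise, the $j$-th component satisfies $X_j(\tau z) = X_{j-1}(z)$ within the position block (and similarly $X_{N+j}(\tau z) = X_{N+j-1}(z)$ within the momentum block), which is exactly the statement that $X_j = \tau^{j-1} X_1$ and $X_{N+j} = \tau^{j-1} X_{N+1}$ once one iterates: $X_j(z) = X_1(\tau^{j-1} z) = (\tau^{j-1} X_1)(z)$.

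Next I would prove \eqref{e.xxx}. By definition \eqref{e.def1}, $\ncamp{X_F}_R = \norm{X_1}_R + \norm{X_{N+1}}_R$, where $X_1 = \partial F/\partial y_1$ (up to sign conventions in $J$) and $X_{N+1} = -\partial F/\partial x_1$. Writing $F = \sum_{l=1}^N \tau^l f$, one has $\partial F/\partial z_1 = \sum_{l=1}^N \partial(\tau^l f)/\partial z_1$, and by the chain rule $\partial(\tau^l f)/\partial z_1 = \tau^l\bigl(\partial f/\partial z_{1+l}\bigr)$, since $\tau^l$ shifts the coordinate index $z_1$ to $z_{1+l}$ (within the appropriate block). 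Summing over $l$ and noting that $\norm{\tau^l g}_R = \norm{g}_R$ because the polynomial norm \eqref{e.polinorm} depends only on the multiset of coefficients, which a permutation of variables does not change, the two contributions $\norm{\partial F/\partial y_1}_R$ and $\norm{\partial F/\partial x_1}_R$ together reassemble, after re-indexing $l \mapsto l' = 1+l \pmod N$, into the full sum $\sum_{l=1}^{2N} \norm{\partial f/\partial z_l}_R$ over all $2N$ partial derivatives of the seed. (One must be a little careful to check that the triangle inequality used here is in fact an equality; this holds because the monomials appearing in $\tau^l(\partial f/\partial z_{1+l})$ for distinct shifts may overlap, but the polynomial norm is the $\ell^1$ norm of coefficients of each fixed monomial, and the identity is really the statement $\norm{\sum_l \tau^l g_l}_R = \sum_l \norm{g_l}_R$ \emph{only if} there is no cancellation — which is why the left-aligned representation and the structure of $f^\oplus$ matter. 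Actually the cleaner route is to observe that each $\partial f/\partial z_l$ for $l = 1,\dots,2N$ appears exactly once, in exactly one of the two seeds $\partial f/\partial z_1$-type sums, after unfolding $f^\oplus$, so the identity is a bookkeeping statement rather than an inequality.)

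The main obstacle I anticipate is purely bookkeeping: keeping the index arithmetic modulo $N$ straight across the two blocks (positions and momenta), and in particular verifying that summing $\sum_{l=1}^N$ the shifted partials $\partial f/\partial z_{1+l}$ over the position block and over the momentum block exhausts, without repetition, all $2N$ partial derivatives $\partial f/\partial z_l$, $l=1,\dots,2N$. The sign conventions hidden in $J$ are irrelevant for the norm since $\norm{\cdot}_R$ is insensitive to an overall sign. There is no analytic difficulty here; the content is entirely the compatibility of the $\oplus$-construction with differentiation and the permutation-invariance of the polynomial norm.
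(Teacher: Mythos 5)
Your argument is essentially the right one, and it is the standard proof of this lemma (the paper itself states it without proof, importing it from \cite{PalP14}): part \eqref{e.seme.campo} follows from $\tau F=F$, the orthogonality of $\tau$ and the fact that $J$ commutes with the block-diagonal action of $\tau$, giving $X_F(\tau z)=\tau X_F(z)$; part \eqref{e.xxx} follows by unfolding $F=\sum_l\tau^l f$, applying the chain rule, and using the permutation invariance of the polynomial norm \eqref{e.polinorm}. The only inaccuracies in the first part are index-direction conventions (whether $X_j(\tau z)$ equals $X_{j+1}(z)$ or $X_{j-1}(z)$), which the paper itself leaves ambiguous since \eqref{e.perm} and \eqref{e.tau} describe mutually inverse shifts; this does not affect the substance.

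The one place where your argument is not airtight is exactly the point you flagged in parentheses and then dismissed: the claim that \eqref{e.xxx} is an exact equality. Your resolution --- ``each $\partial f/\partial z_l$ appears exactly once, so it is bookkeeping rather than an inequality'' --- rules out double-counting of the \emph{terms} $\tau^{l}\bigl(\partial f/\partial z_{m(l)}\bigr)$, but it does not rule out two distinct such terms containing the \emph{same monomial} with coefficients of opposite sign, in which case $\norm{X_1}_R$ is strictly smaller than the sum of the seed norms. This can genuinely happen: for the (admittedly degenerate) seed $f=x_0x_1-x_1x_2$ one has $f^\oplus=0$, so the left-hand side of \eqref{e.xxx} vanishes while the right-hand side does not. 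Since the right-hand side depends on the chosen seed while the left-hand side does not, the statement can only hold as the inequality $\ncamp{X_F}_R\le\sum_{l}\norm{\partial f/\partial z_l}_R$ for a general seed (with equality when no such cancellation occurs); this is the direction actually used in all the perturbative estimates, and the paper explicitly accepts seed-dependence of $\norm{\cdot}^\oplus$, so this is as much a defect of the statement as of your proof --- but the parenthetical justification you give for equality should not be presented as closing the gap.
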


%
%

%
%
%
%

\subsection{Resonant normal form}

In this part we recall, with a slightly different statement more based
on the parameter $r$, the resonant normal form result
of~\cite{PalP14}. Although the model \eqref{e.H} has an additional
nonlinear term, its main Theorem, here formulated as
Theorem~\ref{prop.gen} still apply, since it requires some decay
properties of the seeds of the Hamiltonian which are true also
for~\eqref{e.H}.

Indeed, we first recall the splitting of the Hamiltonian~\eqref{e.H}
as a sum of its quadratic and quartic parts $H=H_0+H_1$, i.e.
\begin{equation}
\label{e.H.dec}
 H_0(x,y) := \frac12\sum_{j=1}^N \quadr{y^2_j + x^2_j +
   a(x_j-x_{j-1})^2} \ , \qquad H_1(x,y) := \frac14\sum_{j=1}^N
 \quadr{x_j^4+ b(x_{j+1}-x_j)^4} \ .
\end{equation}

\subsubsection{Discussion on the small parameters}
\label{sss:small.par}

Since both $0<a<1$ and $0\leq b<1$ have to be considered as small
parameters, we define
\begin{equation}
\label{e.mu}
 \mu:=\sqrt[4]{\frac{2c}{1+2c}}\ ,
\end{equation}
where $c$ has been introduced in \eqref{e.c}: the new parameter $\mu$
will play the role of main perturbation parameter together with the
small radius $R$. Moreover, in order to deal with exponentially
decaying interactions (and to explain why we defined $\mu$ as we did),
let us introduce the following parameters, which will serve as decay
rates in the sense of Definition~\ref{d.class}
\begin{equation}
\label{e.sigma.ab0}
\sigma_a := \ln{\tond{\frac{1+2a}{2a}}}\ ,\qquad \sigma_b :=
\ln{\tond{\frac{1+2b}{2b}}}\ ,\qquad
\sigma_0:=\min\{\sigma_a,\sigma_b\}\ .
\end{equation}
As a consequence of \eqref{e.mu} and \eqref{e.sigma.ab0}, one has
\begin{equation}
\label{e.mu.sigma0}
\sigma_0 = \ln{\tond{\frac{1+2c}{2c}}}\ ,\qquad\qquad \mu =
e^{-\sigma_0/4}\ .
\end{equation}
It is important to notice that $H_1=h_1^\oplus$ with
$h_1\in\Dscr(C_{h_1},\sigma_b)$. Indeed by definition
\begin{displaymath}
H_1(x) = h_1^\oplus\ ,\qquad h_1:=\pm\frac{1}4x_0^4+
\frac{b}4(x_{1}-x_0)^4\ ;
\end{displaymath}
however by rearranging the monomials, it is possible to select $h_1$
as
\begin{displaymath}
h_1=h_1^{(0)} + h_1^{(1)}\ ,\qquad h_1^{(0)}=\frac{\pm1+2b}4x_0^4
\ ,\qquad h_1^{(1)}= \frac32b x_0^2x_1^2 - bx_0x_1(x_0^2+x_1^2)\ .
\end{displaymath}
with
\begin{displaymath}
\norm{h_1^{(l)}}\leq C_{h_1} e^{-\sigma_b l}\ ,\qquad l=0,1\ ,\qquad
C_{h_1}:=\frac74(1+2b)\ ,
\end{displaymath}

\subsubsection{Preliminary linear transformation}
\label{sss:lin.tr}

{We start performing the normalization process with} an
\emph{exponentially localized} linear transformation (see Proposition
2 of~\cite{PalP14}, and also~\cite{GioPP12,GioPP13}) to put the
quadratic part into a resonant normal form. Rewrite the matrix $A$,
introduced in~\eqref{e.Aintro}, as
\begin{equation}
\label{e.def-A}
A= (1+2a)\quadr{\Id - \frac{e^{-\sigma_a}}2(\tau + \tau^{\top})} \ ,
\end{equation}
which is clearly circulant and symmetric, and gives a finite range
interaction, in the form of a $e^{-\sigma_a}$ small perturbation of
the identity. Let the constant frequency $\Omega$ be the average of
the square roots of the eigenvalues of $A$, and take any $\sigma_1\in
(0,\sigma_0)$. We have (see mainly Proposition~3.1 in \cite{GioPP13}
and the related Proposition~1 in \cite{GioPP12})

\begin{proposition}
\label{p.1}
The canonical linear transformation $q=A^{1/4} x$, $p=A^{-1/4}y$ gives
the Hamiltonian $H_0$ the particular resonant normal form
\begin{equation}
\label{e.dec.H0}
 H_0 = H_\Omega + Z_0 \ ,
\qquad
 \Poi{H_\Omega}{Z_0}=0
\end{equation}
with $H_\Omega$ and $Z_0$ cyclically symmetric with seeds
\begin{equation*}
 h_\Omega = \frac\Omega2({\tilde x}_1^2+{\tilde y}_1^2) \ ,
\qquad
 \zeta_0\in\Dscr\bigl(C_{\zeta_0}(a),\sigma_0\bigr) \ ,
\end{equation*}
and transforms $H_1$ into a cyclically symmetric function with seed
\begin{equation*}
h_1\in\Dscr\bigl(C_{h_1}(a),\sigma_1\bigr) \ .
\end{equation*}
\end{proposition}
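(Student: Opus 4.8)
\emph{Plan.} The plan is to run the classical ``diagonalization of the mean frequency'' for circulant quadratic forms (as in Proposition~2 of~\cite{PalP14} and Proposition~3.1 of~\cite{GioPP13}); the only point needing extra care is to keep track of the decay rates, in particular to check that also $H_1$ retains an exponentially localized seed after the linear change of variables.

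First I would observe that, since $a>0$, the matrix $A$ of~\eqref{e.Aintro}--\eqref{e.def-A} is circulant, symmetric and positive definite, its eigenvalues being $(1+2a)\tond{1-e^{-\sigma_a}\cos\frac{2\pi k}{N}}\ge(1+2a)(1-e^{-\sigma_a})>0$. Hence every real power $A^{s}$ is well defined by functional calculus, is again circulant and symmetric, and the various powers commute; in particular the linear map $q=A^{1/4}x$, $p=A^{-1/4}y$ is symplectic. Substituting $x=A^{-1/4}q$, $y=A^{1/4}p$ in~\eqref{e.Aintro} gives $H_0=\frac12\tond{A^{1/2}q\cdot q+A^{1/2}p\cdot p}$. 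Now write $A^{1/2}=\Omega\,\Id+B$ with $\Omega:=\frac1N\operatorname{tr}A^{1/2}$ --- exactly the average of the square roots of the eigenvalues of $A$ --- and $B$ circulant, symmetric and traceless, hence (being circulant) with vanishing diagonal. This yields $H_0=H_\Omega+Z_0$ with $H_\Omega=\frac\Omega2\sum_j(q_j^2+p_j^2)$ and $Z_0=\frac12\tond{Bq\cdot q+Bp\cdot p}$; the identity $\Poi{H_\Omega}{Z_0}=\Omega\tond{Bq\cdot p-Bp\cdot q}=0$ is immediate from the symmetry of $B$ (equivalently, $H_\Omega$ generates the simultaneous rotation of all the planes $(q_j,p_j)$ at the common rate $\Omega$, under which $Z_0$, carrying the same matrix $B$ in the $q$- and the $p$-block, is invariant). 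Renaming $(q,p)=(\tilde x,\tilde y)$, the cyclic seed of $H_\Omega$ is $h_\Omega=\frac\Omega2(\tilde x_1^2+\tilde y_1^2)$, while, since $B$ has zero diagonal, the seed of $Z_0$ has no on-site term and reads $\zeta_0=\frac12\sum_{k\neq0}\beta_k(\tilde x_1\tilde x_{1+k}+\tilde y_1\tilde y_{1+k})$ with $\beta_k:=(A^{1/2})_{1,1+k}$.

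The analytic heart is the exponential localization of the entries of $A^{s}$, uniformly in $N$. Setting $\epsilon:=\frac12 e^{-\sigma_a}\in(0,\frac12)$ one has $\norm{\epsilon(\tau+\tau^\top)}_2=2\epsilon=e^{-\sigma_a}<1$, so the binomial series $A^{s}=(1+2a)^{s}\sum_{n\ge0}\binom sn(-\epsilon)^n(\tau+\tau^\top)^n$ converges; the $(0,m)$ entry of $(\tau+\tau^\top)^n$ counts the walks of length $n$ on the cycle $\ZZ/N\ZZ$ joining $0$ to $m$, hence it is bounded by $2^n$ and vanishes for $n<m$ (for $0\le m\le[N/2]$, the whole range of cyclic distances). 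Therefore, using $|\binom sn|\le1$ for $s\in\{\frac12,\pm\frac14\}$,
\begin{equation*}
  |(A^{s})_{0,m}|\le(1+2a)^{s}\sum_{n\ge m}\Big|\binom sn\Big|(2\epsilon)^n
  \le C(a,s)\,e^{-\sigma_a m}\ ,\qquad
  C(a,s):=\frac{(1+2a)^{s}}{1-e^{-\sigma_a}}\ .
\end{equation*}
Taking $s=\frac12$ gives $|\beta_m|\le C(a)\,e^{-\sigma_a m}\le C(a)\,e^{-\sigma_0 m}$, and translating these entrywise bounds into the polynomial norm $\norm{\cdot}_1$ of the left-aligned pieces $\zeta_0^{(m)}$ shows $\zeta_0\in\Dscr(C_{\zeta_0}(a),\sigma_0)$ in the sense of Definition~\ref{d.class}.

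Finally, the same substitution $x=A^{-1/4}q$ leaves $H_1$ cyclically symmetric and turns its seed into the composition of the original quartic seed $h_1\in\Dscr(C_{h_1},\sigma_b)$ with the linear map $A^{-1/4}$, whose entries decay at rate $\sigma_a$ by the estimate above with $s=-\frac14$. Since $\sigma_a,\sigma_b\ge\sigma_0$, the product/composition estimates for exponentially localized seeds of~\cite{PalP14,GioPP13} apply: the fourfold sum over the internal indices introduced by the composition produces only factors growing polynomially in the interaction distance, which are absorbed into $e^{-(\sigma_0-\sigma_1)m}$ at the price of enlarging the constant. One thus obtains a seed of class $\Dscr(C_{h_1}(a),\sigma_1)$ for the prescribed $\sigma_1\in(0,\sigma_0)$ --- and this unavoidable polynomial loss is precisely why here one cannot keep the full rate $\sigma_0$ for $h_1$. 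I expect this last step, together with the uniform-in-$N$ localization of the matrix functions $A^{\pm1/4},A^{1/2}$, to be the main obstacle; the symplectic character of the map, the explicit form of $H_\Omega,Z_0$ and the commutation $\Poi{H_\Omega}{Z_0}=0$ are then routine linear algebra.
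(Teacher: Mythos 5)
Your argument is correct and follows essentially the same route as the proof the paper relies on (which is not reproduced here but deferred to Proposition~3.1 of~\cite{GioPP13}, Proposition~1 of~\cite{GioPP12} and Proposition~2 of~\cite{PalP14}): the symplectic rescaling by $A^{\pm1/4}$, the splitting $A^{1/2}=\Omega\,\Id+B$ with $B$ circulant and traceless, the binomial-series bound giving uniform-in-$N$ exponential localization of the entries of $A^{s}$, and the composition estimate for the quartic seed with the unavoidable loss from $\sigma_0$ to $\sigma_1<\sigma_0$. Indeed, your last two paragraphs reproduce exactly the content of the remarks the authors place after the statement concerning $\sigma_a\ge\sigma_0$ and the choice of $\sigma_1$.
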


Some remarks are in order.
\begin{enumerate}

\item We first recall that it is exactly the above linear
  transformation which introduces the interaction among all sites,
  with an exponential decay with respect to their distance.

\item The original claim in \cite{GioPP13} would actually give
  $\zeta_0\in\Dscr(C_{\zeta_0}(a),\sigma_a)\subseteq
  \Dscr(C_{\zeta_0}(a),\sigma_0)$, since $\sigma_a\geq \sigma_0$. The
  choice of taking $\zeta_0\in\Dscr(C_{\zeta_0}(a),\sigma_0)$, thus
  loosing a bit of the exponential decay, is useful to simplify the
  control of the decay in the whole normal form construction.

\item As in the proof of Lemma~3.4 of \cite{GioPP13}, the decay rate
  $\sigma_1$ of the seed $h_1$ cannot be equal to that of the linear
  transformation, but can be chosen arbitrarily close, i.e. one has
  that $h_1\in\Dscr\bigl(C_{h_1}(a),\sigma_1\bigr)$ for any
  $\sigma_1<\sigma_0$. This is especially true when
  $\sigma_b>\sigma_a=\sigma_0$. We nevertheless make the following
  choice for $\sigma_1$
  \begin{equation}
    \label{e.sigma.1}
    \sigma_1 := \frac12\sigma_0\ ,
  \end{equation}
  once again, in order to simplify some calculations.

\end{enumerate}

\subsubsection{Normal form Theorem}
\label{sss:nft}

From now on we will simply indicate with $\Cst$ any constant which
does not depend on the relevant parameters, i.e. $R$, $r$ and
$c$. Consider the extensive Hamiltonian $H$ in the new ``exponentially
localized'' coordinates $(\tilde x,\tilde y)$, introduced by the
previous linear transformation
\begin{displaymath}
H = H_\Omega + Z_0 + H_1\ ;
\end{displaymath}
we have (see Theorem~1 in \cite{PalP14}):

\begin{theorem}
\label{prop.gen}
Consider the Hamiltonian $H=h^{\oplus}_{\Omega}+\zeta^{\oplus}_0 +
h^{\oplus}_1$ with seeds $h_{\Omega}=\frac{\Omega}{2}(x_0^2+y_0^2)$,
the quadratic term $\zeta_0$ of class $\Dscr(C_{\zeta_0},\sigma_0)$ with
$\zeta_0^{(0)}=0$, and the quartic term $h_1$ of class
$\Dscr(C_{h_1},\sigma_1\,)$. Pick a positive
$\sigma_0/4\leq\sigma_*<\sigma_1$, then there exist positive $\gamma$,
$r_{*}$ and $C_*$ such that for any positive integer $r$ satisfying
\begin{equation}
\label{e.muperr}
r\lt r_*\ ,
\end{equation}
there exists a finite generating sequence
$\Chi=\{\chi^{\oplus}_1,\ldots,\chi^{\oplus}_r\}$ of a Lie transform
such that $T_{\Chi}\Ham{r}{} = H$ where $\Ham{r}{}$ is an extensive
function of the form 
\begin{equation}
\label{e.Ham.r}
\Ham{r}{} = H_\Omega + \Z + {P^{(r+1)}}\ ,
\qquad\qquad
\begin{aligned}
\Z :&= Z_0 + \dots + Z_r
\\
\lie{\Omega}Z_s&=0 \ , \quad \forall s\in\{0,\ldots,r\} \ ,
\end{aligned}
\end{equation}
with $Z_s$ of degree $2s+2$ and $P^{(r+1)}$ a remainder starting with
terms of degree equal or bigger than $2r+4$.

Moreover, defining $C_r := 64r^2C_*$ and $\sigma_j := \sigma_1
-\frac{j-1}{r}(\sigma_1-\sigma_*)$, the following statements hold
true:
\begin{enumerate}[label=(\roman{*}), ref=(\roman{*})]
\item the seed $\chiph_s$ of $\Chi_s$ is of class
  $\Dscr(C_r^{s-1} \frac{C_{h_1}}{\gamma s}, \sigma_s)$;
\item the seed $\zeta_s$ of $Z_s$ is of class
  $\Dscr(C_r^{s-1}\frac{C_{h_1}}{s}, \sigma_s)$;
\item with the choice $\sigma_* = \sigma_0/4$, if it is satisfied the
  smallness condition on the total energy
  \begin{equation}
    \label{e.R.sm1}
    R<R^*:= \sqrt{\frac2{3(1+e)C_r}}\ ,
  \end{equation}
  then the generating sequence $\Chi$ defines an analytic canonical
  transformation on the domain $B_{\frac23 R}$ with the properties
  \begin{displaymath}
    B_{R/3}\subset T_\Chi B_{\frac23 R} \subset B_R\qquad\qquad B_{R/3}\subset
    T_\Chi^{-1} B_{\frac23 R} \subset B_R\ .
  \end{displaymath}
  Moreover, the deformation of the domain $B_{\frac23 R}$ is
  controlled by
  \begin{equation}
    \label{e.def.Tchi}
    z\in B_{\frac23 R}\qquad\Rightarrow\qquad \norm{T_\Chi(z)-z}\leq
    \Cst C_* R^3\ ,\qquad \norm{T^{-1}_\Chi(z)-z}\leq \Cst C_*
    R^3\ .
  \end{equation}
\item with the choice $\sigma_*=\sigma_0/4$, if it is satisfied
  \eqref{e.R.sm1}, the remainder is an analytic function on
  $B_{\frac23 R}$, and it is represented by a series of extensive
  homogeneous polynomials $\Ham{r}{s}$ of degree $2s+2$
  \begin{equation}
    \label{e.rem.r}
    P^{(r+1)} = \sum_{s\geq r+1}\Ham{r}{s}\qquad \Ham{r}{s}
    = \tond{h^{(r)}_s}^{\oplus}\ ,
  \end{equation}
  and the seeds $h^{(r)}_s$ are of class $\Dscr(2\tilde
  C_r^{s-1}C_{h_1},\sigma_*)$ with $\tilde C_r = \frac32 C_r$.
\end{enumerate}
\end{theorem}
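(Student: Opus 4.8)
Theorem~\ref{prop.gen} is a reformulation of Theorem~1 of~\cite{PalP14}, re-read with the normal form order $r$, rather than the couplings, as the free parameter, so the plan is to recall the structure of that construction and reorganize its estimates accordingly. First I would recall the iterative Lie-transform scheme: one builds the seeds $\chi_1,\dots,\chi_r$ so that, at each total degree $2s+2$, a homological equation of the form $\lie{\Omega}\chi_s=(\text{non-normal part of }\Psi_s)$ is solved, where $\Psi_s$ collects the terms of that degree produced by the Poisson brackets of the previously determined $\chi_j$'s with $H_\Omega+Z_0+H_1$, while its $\ker\lie{\Omega}$-component becomes $Z_s$. Because $H_\Omega$ is the fully resonant oscillator $\frac\Omega2\sum_j(x_j^2+y_j^2)$, the operator $\lie{\Omega}$ is invertible on the complement of its kernel with a bounded inverse; the constant $\gamma$ in item~(i) is precisely a lower bound for $|\lie{\Omega}|$ there, and no Diophantine hypothesis intervenes. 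I would also observe that the extra quartic term $b(x_{j+1}-x_j)^4$ enters the scheme only through the membership $h_1\in\Dscr(C_{h_1}(a),\sigma_1)$ established in Proposition~\ref{p.1} (with $C_{h_1}=\frac74(1+2b)$ before the linear change of variables, still $O(1)$ after it), so the construction of~\cite{PalP14} applies verbatim.

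Next I would invoke the technical lemmas on the class $\Dscr(\cdot,\cdot)$: the Poisson bracket of two seeds of class $\Dscr(C_f,\sigma)$ and $\Dscr(C_g,\sigma')$, after re-alignment of the monomials, is again of that class with constant $\lesssim C_fC_g/(\text{decay loss})$ and a decay rate slightly below $\min\{\sigma,\sigma'\}$, and both $\lie{\Omega}^{-1}$ and the projection on $\ker\lie{\Omega}$ preserve the class. The linear choice $\sigma_j=\sigma_1-\frac{j-1}{r}(\sigma_1-\sigma_*)$ spreads the admissible total loss $\sigma_1-\sigma_*\ge\sigma_0/4$ evenly over the $r$ steps, so each step costs a decay $(\sigma_1-\sigma_*)/r$ and contributes a factor $\propto r/(\sigma_1-\sigma_*)$ to the constant; since $c<1/2$ forces $\sigma_0>\ln2$, this factor is $O(r)$ with an $O(1)$ constant. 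An induction on $s$, absorbing also the combinatorial $1/s$ of the Lie recursion and the degree factor $2s+2$ of the bracket into the extra power of $r$, then yields exactly the bounds of items~(i) and~(ii), with $C_r=64r^2C_*$ and a $C_*$ that depends neither on $r$, nor on $R$, nor on $c$ --- the whole dependence on the couplings being funnelled through $\sigma_a,\sigma_b,\sigma_0$ into the decay rates (and through $C_{h_1}(a)$ into an $O(1)$ prefactor).

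Finally, items~(iii) and~(iv) follow by summing geometric-type series. By Lemma~\ref{l.seme.campo}, $\ncamp{X_{\chi_s}}_R\lesssim(2s+1)C_r^{s-1}\frac{C_{h_1}}{\gamma s}R^{2s+1}$, so the series $\sum_s\ncamp{X_{\chi_s}}_R$ converges as soon as $C_rR^2$ is below an absolute constant --- which is the content of~\eqref{e.R.sm1} --- and is dominated by its first term, giving $\norm{T_\Chi(z)-z}\le\Cst C_*R^3$ (and the same for $T_\Chi^{-1}$), whence, for $R$ small enough, the domain inclusions $B_{R/3}\subset T_\Chi B_{\frac23 R}\subset B_R$. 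Collecting in $P^{(r+1)}$ everything of degree $\ge2r+4$ in $T_\Chi\Ham{r}{}=H$ and estimating the tails of the Lie series term by term produces the remainder seeds of class $\Dscr(2\tilde C_r^{s-1}C_{h_1},\sigma_*)$, with the inflation $\tilde C_r=\frac32C_r$ absorbing the residual geometric sum. The positive constants $\gamma$, $C_*$ and the threshold $r_*$ are then read off these estimates (the role of $r_*$ being to enforce positivity of the decay rates and convergence of the per-step bounds, which with the present choices makes $r_*$ the larger the smaller $c$, as anticipated in Section~\ref{s:2}). The main obstacle --- a matter of bookkeeping rather than of new ideas, since the substance is in~\cite{PalP14} --- is precisely this uniformity: checking that the re-organization around $r$ still delivers a $\gamma$ and a $C_*$ independent of $r$, $R$ and $c$, i.e.\ that all the coupling dependence really is confined to the decay rates and hence to $r_*$, and that the additional $b$-term does not break this separation.
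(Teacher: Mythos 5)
Your proposal matches the paper's treatment: Theorem~\ref{prop.gen} is not re-proved here but imported from Theorem~1 of \cite{PalP14}, and the only points the paper itself checks are exactly the ones you single out --- that the extra $b(x_{j+1}-x_j)^4$ term enters the scheme solely through the membership $h_1\in\Dscr(C_{h_1},\sigma_1)$ guaranteed after the linear transformation of Proposition~\ref{p.1}, and that the thresholds can be reorganized with $r$ (rather than the couplings) as the free parameter via the explicit constants~\eqref{const.prop.gen}. Your sketch of the underlying Lie-transform machinery (homological equation with the fully resonant $\lie{\Omega}$, decay-rate bookkeeping over the $r$ steps, geometric sums for the deformation and remainder) is a faithful outline of the cited construction, so the approach is essentially the same.
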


In the following, for the same reasons bringing to the
choice~\eqref{e.sigma.1}, we will assume $\sigma_*=\sigma_0/4$, as in
the last two sub-points of Theorem~\ref{prop.gen}. Hence, from
\eqref{e.mu.sigma0} and the previous setting of $\sigma_*$ one gets
the relation
\begin{equation}
\label{e.mu.sigma*}
\mu = e^{-\sigma_*}\ ,
\end{equation}
and it is possible to give the following values of some of the
constants involved in the above Theorem:
\begin{equation}
  \label{const.prop.gen}
  \begin{aligned}
    r_* &= \frac{\Omega} {24C_{\zeta_0}}f(\mu)\ ,\qquad
    f(\mu):=\frac{(1-\mu^4)(1-\mu^3)}{\mu^2} \\ \gamma &=
    2\Omega\Bigl(1-\frac{r}{2r_*}\Bigr) \ , \\ C_* &= \frac{3
      C_{h_1}}{\gamma(1-\mu^4)(1-\mu^3)} \ .
  \end{aligned}
\end{equation}
By noticing that condition \eqref{e.muperr} implies
\begin{displaymath}
\Omega<\gamma<2\Omega\ ,
\end{displaymath}
we obtain that $C_*$ {\em essentially} depends on $\mu$, through
$\sigma_0$ and ${\frac{C_{h_1}}{\Omega}}$
\begin{displaymath}
\frac{3 C_{h_1}}{2\Omega(1-\mu^4)(1-\mu^3)} < C_* < \frac{3
  C_{h_1}}{\Omega(1-\mu^4)(1-\mu^3)} \ ;
\end{displaymath}
and this provides $C_r=C_r(r,\mu)$ and $R^* = R^*(r,\mu)$ with
\begin{displaymath}
\frac{\partial C_r}{\partial r}>0   \ ,
\qquad
\frac{\partial C_r}{\partial \mu}>0 \ ,
\qquad\qquad
\frac{\partial R^*}{\partial r}<0   \ ,
\qquad
\frac{\partial R^*}{\partial \mu}<0 \ .
\end{displaymath}

In the forthcoming application, developed in Section \ref{s:3},
instead of fixing $\mu$ as the main (small) parameter like in Theorem
\ref{prop.gen}, we decide to pick the order $r\geq 1$ of the normal
form, and thus the length of the time scale, as the principal
parameter.

As a consequence, by inverting the function $f(\mu)$ in the first of
\eqref{const.prop.gen}, the normal form \eqref{e.Ham.r} holds for all
$\mu<\mu^*(r)$, with
\begin{equation}
\label{e.mu.star}
\mu^*(r):= f^{-1}\tond{\frac{24 C_{\zeta_0}r}{\Omega}}\ .
\end{equation}
Thus, for any $\mu<\mu^*$ we have $R^*(r,\mu) > R^*(r,\mu^*)$. We then
take a threshold $R_*(r)$ for the norm which is uniform with $\mu<\mu^*$
\begin{equation}
\label{e.R.star}
R_{*}(r) := R^*(r,\mu^*(r))\ .
\end{equation}
We summarize the new conditions on the parameters as follows
\begin{align}
\label{e.all.sm}
r &\geq 1\ ,\nonumber\\
\mu &< \mu_1^*(r)\ ,\qquad\Leftrightarrow\qquad c < c_1^*(r)\ ,\\
R &< R_{*}(r)\ .\nonumber
\end{align}

The normal form Theorem \ref{prop.gen} immediately gives the almost
invariance of $H_\Omega$ and $\Z$, which we here formulate
{(see \cite{PalP14}, proof of Corollary~1)} in the
transformed variables $\tilde z=\T_\Chi(z)$

\begin{corollary}
\label{c.Hom.K.var}
Let us take $\tilde z(0)\in B_{\frac49R}$ and let $\tau>0$ be the
escape time of the orbit $\tilde z(t)$ from $B_{\frac23 R}$. Then, for
all times $|t|<\tau$, the approximate integrals of motion $H_\Omega$
and $\Z$ fulfill
\begin{align*}
  |H_\Omega(\tilde z(t))-H_\Omega(\tilde z(0))| &\leq
  \Cst \qquad\;\; \frac{C_{h_1} \Omega}{(1 - \mu)^2} \qquad\;\;
  \;R^4 \tond{\frac23 R^2 C_r}^r \; |t|\ ,
\\
  |\Z(\tilde z(t))-\Z(\tilde z(0))| &\leq 
  \Cst \; \frac{C_{h_1} (\mu C_{\zeta_0} + C_{h_1} R^2)}{(1 - \mu)^2}
  \;R^4 \tond{\frac23 R^2 C_r}^r \; |t|\ .
\end{align*}
\end{corollary}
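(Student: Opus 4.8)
\emph{Plan.} This is the standard almost-invariance (energy) estimate: differentiate $H_\Omega$ and $\Z$ along the flow of $\Ham{r}{}$, notice that only a Poisson bracket with the remainder $P^{(r+1)}$ survives, estimate that bracket pointwise on $B_{\frac23R}$ using the $\Dscr$-class bounds of Theorem~\ref{prop.gen}, and integrate in time.

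First I would reduce to a bracket with the remainder. Since $\Poi{H_\Omega}{\Z}=\sum_{s=0}^{r}\Poi{H_\Omega}{Z_s}=0$ by $\lie{\Omega}Z_s=0$, and $\Poi{H_\Omega}{H_\Omega}=\Poi{\Z}{\Z}=0$ trivially, for any solution $\tilde z(t)$ of $\Ham{r}{}$ that stays in $B_{\frac23R}$ (where $\Ham{r}{}$ is analytic by point~(iv) of Theorem~\ref{prop.gen}) one has $\frac{d}{dt}H_\Omega(\tilde z(t))=\Poi{H_\Omega}{P^{(r+1)}}(\tilde z(t))$ and $\frac{d}{dt}\Z(\tilde z(t))=\Poi{\Z}{P^{(r+1)}}(\tilde z(t))$. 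Integrating from $0$ to $t$ and bounding the integrand by its supremum over the interval, the whole statement reduces to a pointwise bound on $B_{\frac23R}$ (valid for $|t|<\tau$, since then $\norm{\tilde z(t)}\le\frac23R$) of these two brackets.

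Next I would estimate the brackets. Writing $P^{(r+1)}=\sum_{s\ge r+1}\Ham{r}{s}$ with seeds $h^{(r)}_s\in\Dscr(2\tilde C_r^{s-1}C_{h_1},\sigma_*)$, $\tilde C_r=\tfrac32 C_r$ (from~\eqref{e.rem.r}), each $\Poi{H_\Omega}{\Ham{r}{s}}$ is an extensive homogeneous polynomial of degree $2s+2$ (since $\deg H_\Omega=2$); because $h_\Omega=\tfrac\Omega2(x_0^2+y_0^2)$ generates the site-by-site rotation $\lie{\Omega}$, its seed stays of class $\Dscr$ with constant $\le\Cst\,\Omega\cdot 2\tilde C_r^{s-1}C_{h_1}$ and decay rate $\sigma_*$; for the harder $\Poi{\Z}{\Ham{r}{s}}$ I would invoke the Poisson-bracket estimate for extensive functions of~\cite{PalP14}, where both seeds carry nontrivial interaction distance. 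Summing $\norm{h^{(r),(m)}_s}_1\le 2\tilde C_r^{s-1}C_{h_1}e^{-\sigma_* m}$ over $m$, with $e^{-\sigma_*}=\mu<1$ by~\eqref{e.mu.sigma*}, yields a geometric factor $\tfrac1{1-\mu}$, and Lemma~\ref{l.cont.pol} converts the seed norm into $|\Poi{H_\Omega}{\Ham{r}{s}}(\tilde z)|\le\ncamp{\cdot}{}\norm{\tilde z}^{2s+2}$, evaluated with $\norm{\tilde z}\le\tfrac23R$. Finally I would sum over $s\ge r+1$: the dominant term is $s=r+1$, and the identity $\tilde C_r^{\,r}(\tfrac23R)^{2r+4}=(\tfrac23)^4R^4(\tfrac23R^2C_r)^r$ (using $\tilde C_r=\tfrac32C_r$) produces exactly the displayed power $R^4(\tfrac23R^2C_r)^r$; the ratio of consecutive terms is $\tilde C_r(\tfrac23R)^2=\tfrac23C_rR^2<1$ by~\eqref{e.R.sm1}, so the tail contributes only a bounded extra factor. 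For $H_\Omega$ this gives the prefactor $\Cst\,C_{h_1}\Omega/(1-\mu)^2$. For $\Z$ I would split $\Z=Z_0+(Z_1+\dots+Z_r)$: the $Z_0$-bracket contributes the $\mu C_{\zeta_0}$ piece (its seed has $\zeta_0^{(0)}=0$, so the leading interaction distance costs $e^{-\sigma_0}=\mu^4\le\mu$ times $C_{\zeta_0}$), while the higher $Z_s$, of class $\Dscr(C_r^{s-1}C_{h_1}/s,\sigma_s)$, contribute the $C_{h_1}R^2$ piece (each extra unit of $s$ costing $(\tfrac23R)^2C_r<1$ and, for the leading $Z_1$, one power of $C_{h_1}$), giving the prefactor $\Cst\,C_{h_1}(\mu C_{\zeta_0}+C_{h_1}R^2)/(1-\mu)^2$.

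The argument is essentially bookkeeping; the points requiring care are (a) recognizing that $\Poi{H_\Omega}{\cdot}$ is degree-preserving and costs only a factor $\Omega$, so the $H_\Omega$ bound does not acquire the extra power of $R$ that a generic quartic bracket would, and (b) separating cleanly, in the $\Z$ estimate, the $Z_0$ contribution (the $\mu C_{\zeta_0}$ term, coming from the \emph{missing} zero-distance monomial) from the genuinely nonlinear $Z_1,\dots,Z_r$ (the $C_{h_1}R^2$ term), so that the two convergence factors — one from the sum over interaction distances $m$, one from the sum over degrees $s$ — combine into the displayed $1/(1-\mu)^2$. The convergence of all these geometric series is exactly where the smallness condition~\eqref{e.R.sm1} and $\mu<1$ enter.
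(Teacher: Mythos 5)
Your proposal is correct and follows essentially the same route as the paper, which defers this estimate to the proof of Corollary~1 in~\cite{PalP14}: reduce $\frac{d}{dt}H_\Omega$ and $\frac{d}{dt}\Z$ to Poisson brackets with the remainder $P^{(r+1)}$, bound those brackets on $B_{\frac23 R}$ via the $\Dscr$-class seed norms (summing the geometric series in the interaction distance $m$ and in the degree $s$, which is where $1/(1-\mu)^2$ and the leading term $R^4\tond{\frac23 R^2 C_r}^r$ come from), and integrate in time. Your bookkeeping of the prefactors --- the factor $\Omega$ from the degree-preserving bracket with $H_\Omega$, and the split $\mu C_{\zeta_0}+C_{h_1}R^2$ from $Z_0$ (with $\zeta_0^{(0)}=0$) versus $Z_1,\dots,Z_r$ --- matches the stated constants.
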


%
%
%
%

\section{Stability of true and approximated FPU-KG breathers}
\label{s:3}

Let us know denote the normal form part of $H^{(r)}$ --
see~\eqref{e.Ham.r} -- as
\begin{equation}
\label{e.K}
K:=H_\Omega + \Z\ ,\qquad\qquad  \Poi{H_\Omega}{\Z}=0\ ,
\end{equation}
so that the transformed Hamiltonian $H^{(r)}$ can be split as $H^{(r)}
= K + P^{(r+1)}$, and the corresponding Hamilton equations are
\begin{equation}
\label{e.Ham.eq}
\dot z = X_K(z) + X_{P^{(r+1)}}(z)\ .
\end{equation}

The Hamiltonian $K$ (the normal form) looks naturally as the
Hamiltonian of a Generalized discrete Non Linear Schr\"odinger
equation (GdNLS), with $H_\Omega$ in the role of the additional
conserved quantity; an explicit expression of the leading terms of $K$
is available in the Introduction.

As a first, and intermediate, application of such a normal form, we
give an approximation result for the original system~\eqref{e.H}: we
show that for sufficiently small couplings its dynamics stays close
for long times to a closed trajectory in the phase space, provided its
initial datum is also close enough to such an object. This trajectory
is not an orbit of the original system, but it is a breather of the
GdNLS model. The theorem we formulate actually contains, as a first
point, and then exploits, an existence and stability result for the
GdNLS breather itself with respect to the GdNLS dynamics. Such a first
part, despite the generalized nature of the model, is not unexpected
in the anti-continuous limit. The other point, actually the long time
control for the (FPU-)KG model, is less trivial and indeed it is
strongly based on our normal form result.

As a second application, we obtain a result of stability of the true
breather for the original system~\eqref{e.H}, based on the observation
that in the anti-continuous limit there always exists a true breather
which is close enough, with respect to the greatest parameter $c$, to
the approximated one. Thus, the stability we get is actually due to
the stability of the GdNLS orbit.

\subsection{Stability of approximated FPU-KG breathers}
\label{ss:gdnls.br}

Since we base the existence part on the anti-continuous limit, let us
denote by $\tilde\psi_0$ the $0^{\rm th}$-site excitation in the
transformed coordinates $({\tilde x}_j,{\tilde y}_j)$, i.e.  {
\begin{equation}
\label{e.def.psi0}
\tilde\psi_0 := \{({\tilde x}_j,{\tilde y}_j)_{j=0,\ldots,N-1} \ \colon\ \ \ {\tilde x}_0=\rho
,\ \ {\tilde y}_0=0,\ \ {\tilde x}_j={\tilde y}_j=0 \ \forall j\neq0\} \ ,
\end{equation}
}
which is indeed the profile of an initial datum belonging to a
periodic orbit {$\Orb(\tilde\psi_0)$} (trivially a
breather) for the uncoupled system with $a=b=\mu=0$
(see~\eqref{e.constr} below), and for every fixed value of $\rho$. A
consistent choice for the values of $\rho$ will be made later.

\begin{theorem}
\label{p.orb.contr}

Given $r$ and $R$ fulfilling \eqref{e.all.sm},
there exists $c_*(r,R)$ such that, for any $c<c_*$:
\begin{enumerate}

\item there exist a profile $\tilde\psi_{a,b}$ and a frequency
  $\lambda_{a,b}$ such that $\tilde\psi_{a,b} e^{\Im \lambda_{a,b} t}$
  is a Breather solution for the GdNLS \eqref{e.K} with
  $\norm{\tilde\psi_{a,b}}=R/6$ and
  \begin{equation}
    \label{e.ex.psi.ab}
    \norm{\tilde\psi_{a,b}-\tilde\psi_0} \leq \Cst\,\mu\ .
  \end{equation}

\item let us define
\begin{equation}
\label{e.inv.psi.ab}
\psi_{a,b}:=\T_{\Chi}^{-1}{\tilde\psi_{a,b}}\ .
\end{equation}
For any $0<\eps\ll R^2$ there exists $\delta(\eps)$ such that the
(piece of) orbit ${\Orb}(\phi):=\{\phi(t) \ :\ |t| \leq T_{\eps,r,R}
\,,\ \phi(0)=\phi \}$, solution of~\eqref{e.Ham.eq}, satisfies
\begin{equation*}
  \norm{\phi-\psi_{a,b}}<\delta
  \quad\Longrightarrow\quad
  d_H\tond{{\Orb}(\phi),{\Orb}(\psi_{a,b})} < \eps
  \ .
\end{equation*}
where
\begin{equation}
\label{e.times}
T_{\eps,r,R} := C_T \frac{\eps^2}{{R^4}}
{\tond{C_{**}Rr}^{-2r}}\ ,
\end{equation}
with $C_T$ a suitable constant independent on $\eps,\,r$ and $R$,
{and $C_{**}:=8\sqrt{\frac{2C_*}{3}}$.}
\end{enumerate}
\end{theorem}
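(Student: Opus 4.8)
The plan is to prove the two parts separately, since the first is an existence/stability statement internal to the GdNLS (and is essentially a classical MacKay--Aubry continuation argument dressed in the extensive-norm language), while the second is where the normal form remainder actually enters. For part (1), I would continue the single-site profile $\tilde\psi_0$ in the parameter $c$ (equivalently $\mu$) by the implicit function theorem: one looks for relative periodic orbits of the form $\tilde\psi\, e^{\Im\lambda t}$, i.e. fixed points of the map defined by imposing $X_K(\tilde\psi)=\lambda\, \mathbb{J}\tilde\psi$ after passing to complex coordinates $\tilde\psi_j=(\tilde x_j+\Im\tilde y_j)$. At $\mu=0$ the system decouples into $N$ anharmonic oscillators $h_\Omega+Z_0^{(0)}$-type terms and the one-site solution is non-degenerate (the relevant linearized operator is invertible on the complement of the phase direction, using $\partial_\rho\lambda\neq0$, the standard non-resonance/non-degeneracy condition). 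The exponential localization of the seeds of $\Z$ (Theorem~\ref{prop.gen}(ii)) guarantees that $X_K$ depends analytically on $\mu$ with the relevant estimates uniform in $N$, so the branch $\tilde\psi_{a,b}$ exists for $c<c_*(r,R)$ and satisfies the Lipschitz bound~\eqref{e.ex.psi.ab} with the $\Cst\,\mu$ coming from the size of the first correction term $Z_0$ in the coupling. One then rescales $\rho$ so that $\norm{\tilde\psi_{a,b}}=R/6$ exactly; this just fixes $\rho$ as a function of $(a,b)$, $R$, close to $R/6$. The infinite-time orbital stability of $\Orb(\tilde\psi_{a,b})$ for the GdNLS flow follows by a Lyapunov argument using the two conserved quantities $H_\Omega$ and $\Z$ (equivalently, $K$ and the total "mass" $H_\Omega$): the periodic orbit is a non-degenerate constrained extremum of $\Z$ on a level set of $H_\Omega$ (again from the anti-continuous limit, where the single oscillator is obviously such an extremum, plus persistence of the Morse index under small $\mu$), so a level set of the pair $(H_\Omega,\Z)$ traps nearby orbits in a tubular neighborhood of $\Orb(\tilde\psi_{a,b})$. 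I would put the detailed computations of part (1) in the Appendix, as the paper announces.

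For part (2) the strategy is the standard "orbital stability of an approximate solution under a small perturbation of the vector field" scheme, carried through quantitatively. Work in the transformed coordinates: the true flow is $\dot{\tilde z}=X_K(\tilde z)+X_{P^{(r+1)}}(\tilde z)$, and the reference object $\Orb(\tilde\psi_{a,b})$ is an exact orbit of $\dot{\tilde z}=X_K(\tilde z)$. Let $V(\tilde z):=$ (distance of $(H_\Omega(\tilde z),\Z(\tilde z))$ from its value on $\Orb(\tilde\psi_{a,b})$), which by part (1) dominates, near the orbit, the squared orbital distance $d^2(\tilde z,\Orb(\tilde\psi_{a,b}))$ up to constants. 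Along the true flow, $\dot V$ is not zero but is controlled by $|\{H_\Omega,P^{(r+1)}\}|+|\{\Z,P^{(r+1)}\}|$, which by Corollary~\ref{c.Hom.K.var} (or directly by the remainder estimate Theorem~\ref{prop.gen}(iv), the bound $\ncamp{X_{P^{(r+1)}}}_R\lesssim R^4(\tfrac23 R^2 C_r)^r$ via Lemma~\ref{l.seme.campo} and~\eqref{e.cont.pol}) is bounded by $\Cst\, C_{h_1}\Omega (1-\mu)^{-2} R^4(\tfrac23 R^2 C_r)^r$ as long as the orbit stays in $B_{\frac23 R}$. Integrating, if $\norm{\phi-\psi_{a,b}}<\delta$ with $\delta$ small enough that the transformed datum is within $\tfrac49 R$ and $V(\tilde\phi)\lesssim (\eps/2)^2$ (this is where $\delta(\eps)$ is chosen, using the deformation bound~\eqref{e.def.Tchi} to pass between $\phi$-distance and $\tilde z$-distance), then $V(\tilde z(t))\leq (\eps/2)^2 + (\text{drift rate})\cdot|t|$ stays below $\eps^2$ (hence $\tilde z(t)$ stays $\eps$-orbitally-close to $\Orb(\tilde\psi_{a,b})$, in particular inside $B_{\frac23 R}$, so the estimate is self-consistent/bootstrapped) precisely for $|t|\le T_{\eps,r,R}$ with $T_{\eps,r,R}$ as in~\eqref{e.times} — the $\eps^2$, the $R^{-4}$ and the $(C_{**}Rr)^{-2r}$ factors matching the reciprocal of the drift rate after inserting $C_r=64 r^2 C_*$ so that $(\tfrac23 R^2 C_r)^r=(C_{**}^2 R^2 r^2/... )^r$ up to the explicit constant $C_{**}=8\sqrt{2C_*/3}$. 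Finally, one transfers the $\eps$-closeness in $\tilde z$-coordinates back to $\Orb(\phi)$ vs $\Orb(\psi_{a,b})$ using $\T_\Chi^{-1}$ and its deformation bound~\eqref{e.def.Tchi}, plus the triangle inequality for $d_H$; since $\T_\Chi$ is $\Cst C_* R^3$-close to the identity and we may absorb this into $\eps$ by taking $R$ (already fixed) and the constants appropriately, the conclusion $d_H(\Orb(\phi),\Orb(\psi_{a,b}))<\eps$ follows.

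The main obstacle I expect is the quantitative bootstrap in part (2): one must ensure that the a priori assumption "$\tilde z(t)\in B_{\frac23 R}$" used to invoke the remainder and drift estimates is actually recovered from the conclusion "$\tilde z(t)$ stays $\eps$-close to $\Orb(\tilde\psi_{a,b})\subset B_{R/6+\Cst\mu}$" — this is fine because $\eps\ll R^2\ll R$ and $\mu$ small, but it has to be set up cleanly (e.g. via a continuity/connectedness argument on the maximal time of validity) so that there is no circularity, and all the thresholds $c_*(r,R)$, $\delta(\eps)$ are produced in the right logical order. A secondary technical point is checking that the Lyapunov function built from $(H_\Omega,\Z)$ genuinely controls the Hausdorff orbital distance and not merely the distance to the single profile $\tilde\psi_{a,b}$; this needs the non-degeneracy from part (1) together with the fact that both $\Orb(\phi)$ and $\Orb(\tilde\psi_{a,b})$ are compact (pieces of) orbits, so that $d_H$ is the right object — exactly the reason the Hausdorff distance was introduced in the Introduction. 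Tracking the precise constant $C_{**}=8\sqrt{2C_*/3}$ through $C_r=64r^2C_*$ and the $(\tfrac23R^2C_r)^r$ factor is then just bookkeeping.
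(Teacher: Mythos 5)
Your proposal follows essentially the same route as the paper: continuation from the anti-continuous limit via the IFT (the paper phrases it as a constrained critical point of $\Z$ on the level set $H_\Omega=\rho^2$ with $\rho=R/6$ fixed from the start, rather than a periodic-orbit continuation followed by a rescaling of $\rho$, but the two formulations are equivalent here), Lyapunov orbital stability for the GdNLS from the exact conservation of $H_\Omega$ and $\Z$ near the non-degenerate constrained extremizer, and then the drift estimate of Corollary~\ref{c.Hom.K.var} integrated in time to produce $T_{\eps,r,R}$, with the Lipschitz/deformation bound on $\T_\Chi^{\pm1}$ to pass between coordinates. The two obstacles you flag (the bootstrap keeping $\tilde\phi(t)\in B_{\frac23 R}$, and upgrading the one-sided distance to the symmetric Hausdorff control) are handled in the paper exactly as you anticipate.
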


We could rephrase the result as follows: for small but non vanishing
coupling $\mu$, if we start close enough to the trajectory ${\Orb}(
\psi_{a,b})$ of a GdNLS Breather, we stay close to it (actually in a
small tubular neighborhood of it) for long times.

The proof of the Theorem is made of three steps, which are discussed
in the following subsections: first the existence of a breather for
the GdNLS with a continuation from the $\mu=0$ limit, then its orbital
stability exploiting the exact conservation of $H_\Omega$ (or
equivalently of $\Z$) for the Hamiltonian $K$, and as a last step the
control of the time scale needed to see the effect of the remainder
$P^{(r+1)}$ once the dynamics taken into account is that of the
original system.

\subsubsection{Existence of Breather solutions for the GdNLS}

We denote with ${\Sph}\subset\Ph$ the sphere ${\Sph} :=
\{z\in\Ph\,\big|\,H_\Omega(z)=\rho^2\}$ of (small) radius
$\rho<R_*$. The proof of the first part of Theorem~\ref{p.orb.contr}
is given by the Proposition below, setting $\rho=R/6$.

\begin{proposition}
\label{p.exist.b}
Given $\rho<R_*$, there exists a threshold $c_2^*(\rho)$ and a function
$G:(a,b) \mapsto \tilde\psi_{a,b}$, which belongs to
$\C^1([0,c_2^*)\times [0,c_2^*),\Sph)$, such that $G(0,0)=\tilde\psi_0$
    and
\begin{equation}
\label{e.sol.b}
d\Z\big|_{{\Sph}} (\tilde\psi_{a,b},a,b) = 0\ .
\end{equation}
Moreover, $\tilde\psi_{a,b}$ is close to $\tilde\psi_0$
\begin{equation}
\label{e.close.b}
\norm{\tilde\psi_{a,b}-\tilde\psi_0} \leq \Cst\, \mu\ .
\end{equation}
\end{proposition}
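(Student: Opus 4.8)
The plan is to obtain $\tilde\psi_{a,b}$ as a constrained critical point of $\Z$ on the sphere $\Sph$, using a standard continuation (implicit function theorem) argument from the anti-continuous limit $(a,b)=(0,0)$, where the unperturbed critical point is the single-site excitation $\tilde\psi_0$. First I would write $\Z = Z_0 + Z_1 + \dots + Z_r$ and recall from Proposition~\ref{p.1} and Theorem~\ref{prop.gen} that, at $c=0$ (hence $\mu=0$, all the exponentially-decaying coupling coefficients vanishing), the seeds $\zeta_s$ collapse so that $\Z$ reduces to a sum of purely on-site terms $\sum_j W(\tilde x_j^2+\tilde y_j^2)$ for a suitable polynomial $W$ — in particular $\Z$ is invariant under the $U(1)$ rotation generated by $H_\Omega$, and the gauge orbit $\Orb(\tilde\psi_0)$ is an entire circle of critical points of $\Z|_{\Sph}$. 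The key algebraic fact to isolate is that, because $\{H_\Omega,\Z\}=0$, the restricted differential $d\Z|_{\Sph}$ is equivariant under this $U(1)$ action, so it suffices to produce a critical point modulo the gauge symmetry; concretely I would fix the phase by working on the slice $\tilde y_0 = 0$, $\tilde x_0 > 0$, turning the problem into a genuine (nondegenerate) equation.

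Next I would verify the nondegeneracy hypothesis needed for the implicit function theorem. On the slice just described, the Hessian of $Z_0$ (the dNLS-type quartic on-site term, $\mathcal{O}(c^0)(\tilde x_j^2+\tilde y_j^2)^2$ from the Introduction's display) at $\tilde\psi_0$, restricted to the tangent space of $\Sph$ and transverse to the gauge direction, is block-diagonal: one nonzero $1\times 1$ block coming from the excited site $j=0$ (nonzero because $W$ is genuinely quartic, i.e. $W''\neq0$ at $\rho^2$), and on every other site $j\neq 0$ a block governed by the linear frequency mismatch, which is nonzero since at $\tilde\psi_0$ those oscillators sit at the origin where $Z_0$ contributes a nondegenerate quadratic form with coefficient bounded away from zero uniformly (this is the ``non-resonance and non-degeneracy'' alluded to after Theorem~\ref{t.b.kg}). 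Here I would need $\rho<R_*$ so that everything lives inside the domain of validity $B_{R/3}\subset T_\Chi B_{2R/3}$ of Theorem~\ref{prop.gen}, and I would use Lemma~\ref{l.cont.pol} to guarantee that $\Z\in\C^\infty$ with all the derivative bounds controlled by $\norm{\cdot}^\oplus$. Having checked that $d^2(\Z|_{\Sph})$ at $\tilde\psi_0$ is invertible on the slice, the implicit function theorem, applied to the map $(\tilde\psi,a,b)\mapsto d\Z|_{\Sph}(\tilde\psi,a,b)$ at $(\tilde\psi_0,0,0)$, yields a $\C^1$ branch $(a,b)\mapsto\tilde\psi_{a,b}\in\Sph$ solving \eqref{e.sol.b} on some $[0,c_2^*)\times[0,c_2^*)$, with $G(0,0)=\tilde\psi_0$; reassembling the gauge orbit gives the full critical circle. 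Finally, the quantitative closeness \eqref{e.close.b} follows by differentiating the branch: $\partial_c\tilde\psi_{a,b}$ is controlled by $(d^2\Z|_{\Sph})^{-1}$ times $\partial_c d\Z|_{\Sph}$, and the latter is $\mathcal{O}(\mu)$ because every coupling coefficient in the seeds $\zeta_s$ carries a factor decaying like $e^{-\sigma_0 m}$ with $\mu=e^{-\sigma_0/4}$ — so the $c$-derivative of $\Z$ at $\mu=0$ is $\mathcal{O}(\mu)$ (more precisely, since $\partial c/\partial\mu$ and the coupling terms are smooth in $\mu$ with the leading correction linear in $\mu^{k}$ for some $k\ge1$, one gets $\norm{\tilde\psi_{a,b}-\tilde\psi_0}\le\Cst\,\mu$ after integrating from $0$ to $c$).

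The main obstacle I expect is the nondegeneracy check, and specifically making it \emph{uniform} in the normal-form order $r$ and the radius $R$: the ``tail'' terms $Z_1,\dots,Z_r$ could in principle spoil the invertibility of the Hessian, so one must show their contribution is a small perturbation of the $Z_0$-block. This is where the estimates of Theorem~\ref{prop.gen}~(ii) — the seed $\zeta_s$ of class $\Dscr(C_r^{s-1}C_{h_1}/s,\sigma_s)$ — enter decisively: evaluated at a point of norm $\rho\sim R/6$, the $s$-th term contributes at order $R^{2s}C_r^{s-1}$, so the whole tail is $\mathcal{O}(R^2 C_r)=\mathcal{O}(R^2)$ relative to the $\mathcal{O}(1)$ leading block, hence negligible once $R<R_*(r)$ — and additionally each such term carries a power of $\mu$ from the couplings, so in fact the tail is $\mathcal{O}(\mu R^2)$ and the $c=0$ Hessian is exactly the clean on-site one. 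The secondary technical point is the careful handling of the $U(1)$ gauge degeneracy (one genuinely-zero direction of $d^2\Z|_{\Sph}$, tangent to $\Orb(\tilde\psi_0)$), which is disposed of cleanly by the phase-fixing slice; after that the argument is a routine application of the implicit function theorem in finitely many variables, with the one subtlety that the branch and all bounds must be tracked with constants independent of $N$, which is automatic here because $\tilde\psi_0$ is supported on a single site and the exponential decay of the interaction makes all sums over sites $N$-independent.
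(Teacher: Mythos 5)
Your proposal follows essentially the same route as the paper: continuation by the implicit function theorem from the anti-continuous limit, viewing the breather as a constrained critical point of $\Z$ on $\Sph$, removing the $U(1)$ gauge degeneracy (the paper restricts to a submanifold $\M$ tangent to $V=\{\tilde x_0=\tilde y_0=0\}$, which is equivalent to your phase-fixing slice), checking coercivity of the constrained Hessian on the transverse directions, and extracting the $\mathcal{O}(\mu)$ closeness from the $\mathcal{O}(\mu)$ size of the coupling terms. Two small corrections: at $c=0$ the higher normal-form terms $Z_2,\dots,Z_r$ do \emph{not} vanish (they are the on-site Birkhoff corrections $c_s(\tilde x_0^2+\tilde y_0^2)^{s+1}$ with $|c_s|\le C_{h_1}C_r^{s-1}/s$), so their effect must be absorbed into the Lagrange multiplier via the bound \eqref{e.lambda0.est} under \eqref{e.R.sm1} --- which is exactly your ``relative $\mathcal{O}(C_rR^2)$'' argument, so nothing breaks --- and the nondegeneracy of the $j\neq0$ blocks comes from that multiplier being nonzero (the blocks are $-2\lambda_0\Id$ with $|\lambda_0|>\frac57 C_{h_1}\rho^2$), i.e.\ from the nonlinear frequency shift of the excited site, not from any quadratic contribution of the quartic on-site potential at the origin of the unexcited sites.
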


A formal proof of Proposition~\ref{p.exist.b} is deferred to the
Appendix. As we said above, the idea for existence and localization is
to exploit a continuation from the uncoupled limit.  If $\mu=0$, the
model $K$ reduces to a system of $N$ uncoupled an-harmonic oscillators
which admits {$\tilde\psi_0$} (see~\eqref{e.def.psi0}
above) as a local extremizer of the constrained problem
\begin{equation}
\label{e.dec.constr}
\lambda X_{H_\Omega}(z) = X_{\Z}(z)\ ,\qquad \Z:=Z_1+\ldots+Z_r\ ,
\end{equation}
where
\begin{displaymath}
  \zeta_s(\tilde x,\tilde y) :=
      c_s({\tilde x}_0^2+{\tilde y}_0^2)^{s+1}\ .
\end{displaymath}
Indeed, for $\mu=0$ (which means $a=b=0$), the first linear
transformation becomes the identity and all the resonant normal form
construction reduces to $N$ identical Birkhoff normal forms for a
single an-harmonic oscillator. This means that
$\sigma_0=\sigma_*=\infty$ and $\gamma=\Omega=1$ and $c_s$ fulfill
\begin{equation}
\label{e.cs}
\begin{cases}
|c_1| = C_{h_1}\\
|c_s| \leq \frac{C_{h_1}}{s}C_r^{s-1}\ ,\qquad s=2,\ldots,r
\end{cases}\ .
\end{equation}
Moreover, {from its definition in
  Theorem~\ref{prop.gen},} $C_r = {\cal O}\tond{r^2C_{h_1}}$. The
uncoupled constrained problem \eqref{e.dec.constr} reads explicitly
\begin{equation}
\label{e.constr}
\begin{cases}
2\lambda_0 {\tilde x}_j = 4 {\tilde x}_j \sum_{s=1}^r s c_s ({\tilde x}_j^2+{\tilde y}_j^2)^{s} 
\\
2\lambda_0 {\tilde y}_j = 4 {\tilde y}_j \sum_{s=1}^r s c_s ({\tilde x}_j^2+{\tilde y}_j^2)^{s}
\\
\sum_j {{\tilde x}_j^2+{\tilde y}_j^2}=\rho^2
\end{cases}\ ,
\qquad\Rightarrow\qquad
\tilde\psi_0(t) = \tilde\psi_0 e^{\Im \lambda_0 t} \ ,
\end{equation}
with the Lagrange multiplier $\lambda_0 = 2 \sum_{s=1}^r s c_s
\rho^{2s}$ satisfying
\begin{equation}
\label{e.lambda0.est}
|\lambda_0|\geq 2 C_{h_1}\rho^2 - 2 \sum_{s=2}^r s |c_s| \rho^{2s} >
\frac57 C_{h_1} \rho^2\ ,
\end{equation}
where \eqref{e.cs} and the condition \eqref{e.R.sm1} have been used.
The constrained Hessian $M$ is a block-diagonal matrix with blocks
$M_j$; when evaluated on $\tilde\psi_0$, for any $j\not=0$ the block
is $M_j(\tilde\psi_0)=-2\lambda_0\Id$, while for $j=0$ its spectrum is
$\sigma(M_0(\tilde\psi_0))=\{0,{\cal O}(C_{h_1} \rho^2)\} =
\{0,4\lambda_0\}$ with associated eigenvectors $(-{\tilde y}_0,{\tilde x}_0)$ for the
Kernel and $({\tilde x}_0,{\tilde y}_0)$ for the positive direction. This easily proves
that $M$ is definite in all the directions transverse to the orbit
generated by the Hamiltonian field
$X_{H_\Omega}=\Omega(-{\tilde y}_0,{\tilde x}_0,0,\ldots,0)$.
Then\footnote{As clearly explained for example in \cite{Wei99},
  Section 5.} for $\mu/\rho^2$ small enough the Implicit Function
Theorem (IFT) can be applied to uniquely continue $\tilde\psi_0$ to a
solution $\tilde\psi_{a,b}$ of $\Z$ constrained to the level surfaces
of $H_\Omega$ which generates a breather evolution
${\Orb}\tond{\tilde\psi_{a,b}}$
\begin{equation}
\label{e.Br.gdnls}
\begin{cases}
  \lambda_{a,b}\nabla H_\Omega = \nabla \Z
  \\
  \sum_j {{\tilde x}_j^2+{\tilde y}_j^2}=\rho^2
\end{cases}\ ,
\qquad\Rightarrow\qquad
{\Orb}\tond{\tilde\psi_{a,b}} = \tilde\psi_{a,b} e^{\Im \lambda_{a,b} t}\ .
\end{equation}

Concerning the exponential localization, we first remark that it is a
meaningful property also in finite dimension since we aim at estimates
uniform in $N$. From a technical point of view, in the infinite
dimensional case ($N=\infty$) the exponential decay of the amplitudes
${\tilde x}_j^2+{\tilde y}_j^2$ of \eqref{e.Br.gdnls} can be obtained
for example with the IFT on the (Hilbert) phase space
$\ell^2_\sigma\times\ell^2_\sigma$ of square summable sequences with
an exponential decay. Alternatively, it can be obtained by homoclinic
orbits, as in \cite{QinX07}, or by some properties of the inverse of a
Tridiagonal linear operator (still in the IFT framework), as in
\cite{MacA94}.

In the finite case ($N<\infty$), we can speak of an asymptotic
exponential decay as $N$ grows arbitrarily large. For the sake of
simplicity, we prefer showing the proof in the energy norm.  However,
we stress here that the proof holds also when using a norm with
exponential weights, i.e. the finite dimensional subspace of
$\ell^2_\sigma$. In this case, the coercivity constant of the
constrained extremizer is proportional to $e^{-2\sigma}$, hence the
threshold $\mu^*(\rho,\sigma)$ will be a decreasing function of the
decay rate $\sigma$.

%
%

\subsubsection{Orbital stability of the GdNLS breather}

\begin{proposition}
\label{p.orb.stab}
Let $\tilde\psi_{a,b}$ be given by Proposition~\ref{p.exist.b}. For
any positive $\eps\ll \rho^2$ there exists a positive $\delta(\eps)$
such that the orbit ${\Orb}(\tilde\phi):=\{\tilde\phi(t) \ :\ t\in\RR
\,,\ \tilde\phi(0)=\tilde\phi \}$ of the GdNLS (see~\eqref{e.K}),
satisfies
\begin{equation*}
  \norm{\tilde\phi-\tilde\psi_{a,b}}<\delta
  \quad\Longrightarrow\quad
  d_H\tond{{\Orb}(\tilde\phi),{\Orb}(\tilde\psi_{a,b})} < \eps
  \ .
\end{equation*}
\end{proposition}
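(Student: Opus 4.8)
The plan is to run a Lyapunov (Grillakis--Shatah--Strauss) type argument built on the two constants of motion of the GdNLS flow \eqref{e.K}: since $\Poi{H_\Omega}{\Z}=0$ one has $\Poi{H_\Omega}{K}=\Poi{\Z}{K}=0$, so both $H_\Omega$ and $\Z$ are conserved. The natural Lyapunov functional is $\Lscr:=\Z-\lambda_{a,b}H_\Omega$: by \eqref{e.Br.gdnls} the profile $\tilde\psi_{a,b}$ is a critical point of $\Lscr$, and since $H_\Omega$ and each $Z_s$ (hence $\Z$, hence $\Lscr$) are invariant under the $S^1$-flow of $X_{H_\Omega}$ --- which is precisely the one tracing out $\Orb(\tilde\psi_{a,b})=\{e^{\Im\theta}\tilde\psi_{a,b}\}_\theta$ --- every point of $\Orb(\tilde\psi_{a,b})$ is a critical point of $\Lscr$ with the common value $\Lscr_0:=\Lscr(\tilde\psi_{a,b})$.

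The core step is a coercivity estimate in a tubular neighborhood of the orbit. First I would check that the second variation $D^2\Lscr(\tilde\psi_{a,b})$ is \emph{negative definite} on the $(2N-2)$-dimensional subspace transverse both to the group direction $X_{H_\Omega}(\tilde\psi_{a,b})$ and to $\nabla H_\Omega(\tilde\psi_{a,b})$: at $\mu=0$ this is exactly the computation recorded just before \eqref{e.Br.gdnls} (the blocks $M_j(\tilde\psi_0)=-2\lambda_0\Id$ for $j\neq0$, while on the $j=0$ block the unique non-kernel direction is $(\tilde x_0,\tilde y_0)\propto\nabla H_\Omega(\tilde\psi_0)$, carrying a positive eigenvalue), and it is strict because $|\lambda_0|>\tfrac57 C_{h_1}\rho^2>0$ by \eqref{e.lambda0.est}; it then persists for $c<c_*$ small, since $G\in\C^1$ (Proposition~\ref{p.exist.b}) makes $D^2\Lscr(\tilde\psi_{a,b})$, $\lambda_{a,b}$ and the relevant subspaces depend continuously on $(a,b)$ --- this may force a further shrinking of $c_*$. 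Then, using a standard modulation/tubular-neighborhood decomposition --- write $z$ near the orbit as $z=e^{\Im\theta}(\tilde\psi_{a,b}+w)$ with $w$ in a fixed transversal to the group direction and $\norm{w}\asymp d(z,\Orb(\tilde\psi_{a,b}))$ --- Taylor expanding $\Lscr(z)=\Lscr(\tilde\psi_{a,b}+w)$, splitting $w$ into its $\nabla H_\Omega(\tilde\psi_{a,b})$-component (bounded by $|H_\Omega(z)-\rho^2|$ up to quadratic errors) and the remaining part (on which $D^2\Lscr\le-\alpha\,\Id$), and absorbing the cross terms by Young's inequality, I expect to obtain
\begin{equation*}
  d\bigl(z,\Orb(\tilde\psi_{a,b})\bigr)^2 \;\le\; \frac{C}{\alpha}\bigl(\Lscr_0-\Lscr(z)\bigr)\;+\;C'\bigl|H_\Omega(z)-\rho^2\bigr|^2
\end{equation*}
uniformly (in $N$) for $z$ in that neighborhood.

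The stability conclusion is then routine. Given $0<\eps\ll\rho^2$, I would take $\delta$ small; for $\norm{\tilde\phi-\tilde\psi_{a,b}}<\delta$ one gets $|\Lscr_0-\Lscr(\tilde\phi)|\le C\delta^2$ (as $\tilde\psi_{a,b}$ is critical for $\Lscr$) and $|H_\Omega(\tilde\phi)-\rho^2|\le C\delta$, and by \emph{exact} conservation the same bounds hold for $\tilde\phi(t)$ at all $t\in\RR$. A continuity/bootstrap argument then keeps $\tilde\phi(t)$ in the neighborhood where the estimate above is valid and yields $d(\tilde\phi(t),\Orb(\tilde\psi_{a,b}))^2\le C''\delta^2$ for all $t$; choosing $\delta<\eps/\sqrt{C''}$ (and small enough so the bootstrap closes) gives $d(\Orb(\tilde\phi),\Orb(\tilde\psi_{a,b}))=\sup_t d(\tilde\phi(t),\Orb(\tilde\psi_{a,b}))<\eps$. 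For the other half of the Hausdorff distance I would note that inside such a thin tube the angular velocity of $\tilde\phi(t)$ along the circle stays close to $\lambda_{a,b}$, which is bounded away from $0$ (close to $\lambda_0\neq0$) for $c$ small, so the phase of $\tilde\phi(t)$ sweeps all of $S^1$ and every point of $\Orb(\tilde\psi_{a,b})$ is $\eps$-approached by some $\tilde\phi(t)$; hence $d(\Orb(\tilde\psi_{a,b}),\Orb(\tilde\phi))<\eps$ as well, and $d_H(\Orb(\tilde\phi),\Orb(\tilde\psi_{a,b}))<\eps$.

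The step I expect to be the real obstacle is the coercivity estimate: the second variation of $\Lscr$ at the breather is genuinely \emph{indefinite} in the full phase space (negative on the $2N-2$ directions transverse to both the orbit and $\nabla H_\Omega$, but positive along $\nabla H_\Omega$), so one must bring in the second conserved quantity $H_\Omega$ to control that last, ``bad'' direction, and combine it with the modulation decomposition to factor out the $S^1$-degeneracy. The conservation laws, the time bootstrap and the winding argument for the second half of $d_H$ are standard by comparison.
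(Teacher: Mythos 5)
Your proposal is correct and follows essentially the same route as the paper: a coercivity estimate for the second variation on the subspace transverse to the group direction and to $\nabla H_\Omega$, combined with the exact conservation of $H_\Omega$ and $\Z$ in a tubular neighborhood, plus a winding argument for the second half of the Hausdorff distance. The only cosmetic difference is that you package the two invariants into the single Lagrange functional $\Z-\lambda_{a,b}H_\Omega$, whereas the paper works directly with $\Z$ constrained to the level set of $H_\Omega$ and bounds the $\nabla H_\Omega$-displacement and the transverse displacement separately (its estimate \eqref{e.dist.1}); the two formulations are equivalent.
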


Given the above use of a constrained critical point formulation to get
the existence of the Breather, the Lyapunov stability in the energy
norm of such an orbit follows once we verify that the Breather is
still an extremizer of $\Z$ in all the constrained transverse
directions and we exploit the fact that $H_\Omega$ and $\Z$ are exact
constants of motion for $K$. The detailed proof is deferred to the
Appendix.

%
%

\subsubsection{Orbital stability of the approximated FPU-KG breather}

Here\footnote{From this subsection on, we will often use $d(A,B)$, as
  defined in~\eqref{e.dist.def}, in the particular case of the set $A$
  given by a single point.} we want to use the existence and stability
of $\tilde\psi_{a,b}$ of Propositions~\ref{p.exist.b}
and~\ref{p.orb.stab} in order to prove the second part of
Theorem~\ref{p.orb.contr}. For this reason, we start taking
\begin{equation}
c_*(r,R) := \min\{c_1^*,c_2^*\}\ ,
\end{equation}
where $c_{1,2}^*$ are introduced in \eqref{e.all.sm} and in
Proposition~\ref{p.exist.b}.

We initially remark that the result is first formulated for the
transformed Hamiltonian~\eqref{e.K}. In order to give the
corresponding statement in the original variables, one has to recall
that the canonical transformation $\T_{\Chi}$ is a perturbation of the
identity \eqref{e.def.Tchi}, hence
\begin{displaymath}
\T_{\Chi}(z) = z + w(z)\qquad\qquad \norm{w(z)} = {\cal O}(\norm{z}^3)\ ,
\end{displaymath}
which implies that $\T_{\Chi}$ (and its inverse) is locally Lipschitz
in any ball $B_R$ sufficiently close to the origin, with a constant $L
= {\cal O}(1)$. Thus the control in the transformed variables can be
transferred to the original ones using the Lipschitz constant of
$T_\Chi^{-1}$
\begin{displaymath}
d(\phi(t),{\Orb}(\psi_{a,b})) = \inf_{\tilde
  w\in{\Orb}(\tilde\psi_{a,b})}\norm{\T_{\Chi}^{-1} \tilde \phi(t) -
  \T_{\Chi}^{-1} \tilde w} \leq L
d(\tilde\phi(t),\Orb(\tilde\psi_{a,b}))\ .
\end{displaymath}

Let us work, then, in the transformed variables $\tilde z$. Our
original system is in the normal form \eqref{e.Ham.eq}, thus
$H_\Omega$ and $\Z$ are only approximate integrals of motion. Hence
the drift from the tubular neighborhood of ${\Orb}(\tilde\psi_{a,b})$
is bounded by the variation of $H_\Omega$ and $\Z$. We assume
$\phi(0)\in B_{R/3}$ and $\norm{\tilde\psi_{a,b}}=R/6$ (hence
$\tilde\psi_{a,b}\in B_{\frac23 R}$) where $R$ satisfies
\eqref{e.all.sm}. We define
\begin{displaymath}
\tilde\phi(t) := \T_\Chi\phi(t)\ ,\qquad\qquad \tilde\phi(0) :=
\T_\Chi\phi(0)\ .
\end{displaymath}
The variations of $H_\Omega$ and $\Z$ in the transformed variables are
controlled by Corollary \ref{c.Hom.K.var}, at least as long as
$\tilde\phi(t)\in B_{\frac23R}$. 

The idea is to work as in the proof of Proposition~\eqref{p.orb.stab}
in the Appendix. Let us assume that $\tilde\phi(t)\in\U$ (which is
true for $t<T_{\U}$ for some $T_{\U}$ if $\tilde\phi(0)$ is
sufficiently close to $\tilde\psi_{a,b}$), where $\U$ is the tubular
neighborhood (defined in the above mentioned proof). In such a
neighborhood the orbital distance from $\tilde\psi_{a,b}$ can be
related to the variations of $H_\Omega$ and $\Z$ as in
\eqref{e.dist.1}. Then it follows immediately
\begin{align*}
d(\tilde\phi(t),\Orb(\tilde\psi_{a,b})) &<
\sqrt{c_3|H_\Omega(\tilde\phi(t))-H_\Omega(\tilde\phi(0))| +
  \frac{c_4}{C_\mu}|\Z(\tilde\phi(t)) - \Z(\tilde\phi(0))|} +\\
&+\sqrt{c_3|H_\Omega(\tilde\phi(0))-H_\Omega(\tilde\psi_{a,b})| +
  \frac{c_4}{C_\mu}|\Z(\tilde\phi(0)) - \Z(\tilde\psi_{a,b})|}=:A+B\ .
\end{align*}
with $C_\mu$ defined at the beginning of the proof of
Proposition~\ref{p.orb.stab}. By using both $(C_{\zeta_0}\mu+C_{h_1}
R^2)/C_\mu={\cal O}(1)$ and Corollary \ref{c.Hom.K.var}, we obtain
\begin{equation}
\label{e.dist.2}
A^2 \leq \Cst \frac{C_{h_1} \Omega}{(1-\mu)^2}
{\;R^4 \tond{\frac23 R^2 C_r}^r \; |t|\ ,}
\end{equation}
which gives
\begin{displaymath}
A<\frac{\eps}{2L}\ ,\qquad\qquad |t|\leq T_{\eps,r,R}\ ,
\end{displaymath}
with a suitable choice of $C_T$. On the other hand, the distance in
the original coordinates can be bounded by exploiting the (local)
Lipschitz constant $L$
\begin{displaymath}
d(\tilde\phi(0),\tilde\psi_{a,b}) \leq L d(\phi(0),\psi_{a,b} )\leq
L\delta(\eps)\ ,
\end{displaymath}
thus for any $\eps$ there exists $\delta(\eps)$ such that
\begin{displaymath}
d(\phi(0),\psi_{a,b} )<\delta \qquad\Rightarrow\qquad
B<\frac{\eps}{2L}\ .
\end{displaymath}
We can collect all the previous estimates to get
\begin{displaymath}
d(\tilde\phi(t),\Orb(\tilde\psi_{a,b})) < \frac{\eps}{L}\ ,
\end{displaymath}
which ensures $\tilde\phi(t)\in\U$ and yields to
\begin{equation}
\label{e.dist.3}
d(\phi(t),{\Orb}(\psi_{a,b})) < \eps \ll R^2\ ,
\end{equation}
for all $|t|\leq T_{\eps,R,r}$, i.e. we have
$d(\Orb(\phi),{\Orb}(\psi_{a,b})) < \eps$.  The same arguments of the
final part of the proof of Proposition~\ref{p.orb.stab} apply here, so
we can get also $d({\Orb}(\psi_{a,b}),\Orb(\phi)) < \eps$ and conclude
the estimate.

As a final comment, we observe that for the time scale considered, the
orbits $\Orb(\phi)$ and $\Orb(\tilde\phi)$ remain in the domains of
definition of the normal form. Indeed since
\begin{displaymath}
\norm{\T_{\Chi}^{-1}{\Orb}(\tilde\psi_{a,b})} = \norm{\tilde\psi_{a,b}}
+ {\cal O}(R^3)=\frac{R}6+ {\cal O}(R^3)\ ,
\end{displaymath}
we obtain
\begin{displaymath}
\norm{\phi(t)} < \frac{R}3\ ,\qquad\qquad \norm{\tilde\phi(t)}<\frac23R\ . 
\end{displaymath}

\qed

%
%

\subsection{Orbital stability of the true FPU-KG Breather: proof of
  Theorem \ref{t.b.kg}}

We recall again that when $c=0$ the normal form transformation
$\T_\Chi$ corresponds to the common Birkhoff change of coordinates
replicated for all the identical an-harmonic oscillators. We denote by
\begin{equation}
\label{e.def.Psi0}
\Psi_0 := {\T}_{\Chi}^{-1} \tilde\psi_0 \ ,
\end{equation}
the one-site excitation in the Birkhoff coordinates, with
$\tilde\psi_0$ given by \eqref{e.def.psi0}. Since
\begin{displaymath}
\norm{\Psi_0 - \tilde\psi_0} \approx \norm{\tilde\psi_0}^3\ ,
\end{displaymath}
the new amplitude will be a $R^3$ deformation of the original one
\begin{displaymath}
\norm{\Psi_0} = \frac{R}6 + {\cal O}(R^3)\ .
\end{displaymath}

At fixed small amplitude, when the coupling parameters $a,\,b$ are
switched on, the one-site periodic orbit $\Psi_0$ can be continued to
(form) a family $\Psi_{a,b}$, provided $c<c_3^*(R)$, as originally
proved in \cite{MacA94}. On the other hand, the reference solution
$\tilde\psi_0$ can be continued to (form) a family $\tilde\psi_{a,b}$
of orbitally stable Breather solutions for the normal form
\eqref{e.K}, as claimed in Proposition~\ref{p.orb.stab}. We recall
that in \eqref{e.inv.psi.ab} we have denoted by $\psi_{a,b}$ the
inverse image of such a family of {\em approximated Breather solution}
for the original FPU-KG model, in the sense of
Proposition~\ref{p.orb.contr}. Due to our initial choice for $\Psi_0$,
the two families initially coincide. Hence, there exists
$c_4^*(r,R)<c_3$, such that for $c<c_4^*$ the two families are
$\mu$-close
\begin{equation}
\label{e.a-close}
\norm{\Psi_{a,b} -{\psi_{a,b}}}<\Cst\,\mu\ .
\end{equation}

With this kind of control we are actually able to close the proof by
the use of the triangle inequality (and this is ultimately the reason
to use the Hausdorff distance). Indeed we control the distance between
$\Orb(\Psi_{a,b})$ and $\Orb(\phi)$ triangulating via
$\Orb(\psi_{a,b})$.

We proceed as follow. We exploit the stability of the GdNLS breathers
to control both $\Orb(\phi)$ and $\Orb(\Psi_{a,b})$. We thus apply
twice the second part of Theorem~\ref{p.orb.contr}, first to
$\Orb(\phi)$:
\begin{equation}
\label{e.tri.1}
\exists \delta(\eps/2)\ :\ \norm{\phi-\psi_{a,b}}<\delta
\qquad\Longrightarrow\qquad
d_H\tond{\Orb(\psi_{a,b}),\Orb(\phi)}<\frac{\eps}2 \ ,
\end{equation}
and then to  $\Orb(\Psi_{a,b})$:
\begin{equation}
\label{e.tri.2}
\exists \delta(\eps/2)\ :\ \norm{\Psi_{a,b}-\psi_{a,b}}<\delta
\qquad\Longrightarrow\qquad
d_H\tond{\Orb(\psi_{a,b}),\Orb(\Psi_{a,b})}<\frac{\eps}2 \ .
\end{equation}
Concerning this second estimate we remark that the period of the
Breather $\Orb(\Psi_{a,b})$ is of order 1 and surely
shorter\footnote{Unless one decides to take { $\eps\ll R^2(Rr)^r$},
  which is not necessary to get a meaningful result of orbital
  stability. In that case, one would get the stability only of a piece
  of the periodic orbit.} that then stability time $T_{\eps,r,R}$; the
Breather itself is thus entirely contained in the tubular
neighborhood.

In order to use implications~\eqref{e.tri.1} and~\eqref{e.tri.2} one
has to ensure the control on the distance of the initial datum from
$\psi_{a,b}$: for $\Psi_{a,b}$ we use~\eqref{e.a-close}, and for
$\phi$ we triangulate again, this time around $\Psi_{a,b}$. More
precisely
\begin{equation*}
  \norm{\phi-\psi_{a,b}} \leq \norm{\phi-\Psi_{a,b}} +
  \norm{\Psi_{a,b}-\psi_{a,b}} < \delta(\eps/2) \ ,  
\end{equation*}
where the first addendum is the one whose smallness we are free to
impose in the statement of the Theorem, and the second can be made as
small as we wish again using~\eqref{e.a-close}.

Thus, provided $c$ is small enough to effectively
use~\eqref{e.a-close}, and we are close enough to $\Psi_{a,b}$ with
our initial datum $\phi$, estimates~\eqref{e.tri.1}
and~\eqref{e.tri.2} hold, so that
\begin{equation*}
  d_H\tond{\Orb(\phi),\Orb(\Psi_{a,b})} \leq
  d_H\tond{\Orb(\phi),\Orb(\psi_{a,b})} +
  d_H\tond{\Orb(\psi_{a,b}),\Orb(\Psi_{a,b})} < \eps.
\end{equation*}
 
\qed

%
%
%
%
%

\section{Appendix}
\label{s:6}

\subsection{Proof of Proposition \ref{p.exist.b}}

Differently from the mostly used technique of Lagrange multipliers
(see e.g.~\cite{Wei99}), we here prefer working locally on the
constraint, thus we make use of a local parametrization of the
manifold with its tangent space. In this way, we still have a
functional defined over a linear euclidean space. After this
preliminary operation, the problem is treated with the usual IFT (see
\cite{AmbP95} and \cite{KolF89}). The geometric part of the proof is
trivial since the phase space is of finite dimension $2N$. However,
the estimates are uniform with $N$.

We consider the tangent space in a point ${\tilde z}\in{\Sph}$, as
defined by $T_{\tilde z}{\Sph} := \{Y\in\Ph\,\big|\,\sum_j {\tilde
  z}_j Y_j=0\} = \inter{{\tilde z}}^\perp$, where $\inter{{\tilde z}}$
represents the linear space generated by ${\tilde z}$. Since
$X_{H_\Omega}({\tilde z})\in T_{\tilde z}{\Sph}$, the set $V:=\{Y\in
T_{\tilde z}{\Sph}\, \big| \,\sum_j Y_j (X_{H_\Omega}({\tilde
  z}))_j=0\}\subset T_{\tilde z}{\Sph}$ is a linear subspace of $\Ph$
(of dimension $2N-2$).

Take ${\tilde z}=\tilde\psi_0$ as in \eqref{e.def.psi0}. The phase
space $\Ph$ can be decomposed into the direct sum of the tangent space
$T_{\tilde\psi_0}{\Sph}$ and its orthogonal direction $\tilde\psi_0$,
and also the tangent space itself can be decomposed into the field
direction $X_{H_\Omega}(\tilde\psi_0)$ and its orthogonal complement
$V$
\begin{displaymath}
\Ph =
T_{\tilde\psi_0}{\Sph}\oplus{\tilde\psi_0}\ ,
\qquad
T_{\tilde\psi_0}{\Sph} = V\oplus X_{H_\Omega}(\tilde\psi_0)\ .
\end{displaymath}
This gives the characterization
\begin{equation}
  V=\graff{(\tilde x,\tilde y)\in\Ph\,\big|\,{\tilde x}_0={\tilde y}_0=0}\ .
\end{equation}

Let us work locally on a neighborhood of $\tilde\psi_0\in{\cal
  S}$. There exist ${\U}(\tilde\psi_0)\subset{\Sph}$ and a function
$f:{\cal W}\subset
T_{\tilde\psi_0}{\Sph}\rightarrow\inter{\tilde\psi_0}$ such that, for
any ${\tilde z}\in{\U}$ there exists $h\in T_{\tilde\psi_0}{\Sph}$
satisfying
\begin{displaymath}
{\tilde z} = P(h):= \tilde\psi_0+h+f(h)\ ;
\end{displaymath}
in rough words, locally the sphere is the graph of a function $f$
defined on the tangent space. The above map is a $\C^2(\W,\U)$
diffeomorphism.  From the previous decomposition of
$T_{\tilde\psi_0}{\Sph}$, it is locally well defined the submanifold
\begin{equation}
\label{e.subman.M}
\M := \{{\tilde z}\in{\U}\, \big| \, {\tilde z}=P(h),\, h\in V\cap\W\}\ .
\end{equation}
By construction we have $T_{\tilde\psi_0}\M = V$.

Since $H_\Omega$ is a preserved quantity for $\Z$, the flow of
$X_{H_\Omega}$ is a symmetry and then
\begin{equation}
d\Z\big|_{{\Sph}} ({\tilde z},a,b) =
0\qquad\Leftrightarrow\qquad d\Z\big|_{\M}(\tilde z,a,b)=0\ .
\end{equation}

We are interested in the problem
\begin{displaymath}
d \Z\big|_{\M} ({\tilde z},a,b) = 0 \ ,
\end{displaymath}
which has the solution $\tilde\psi_0$ for $a=b=\mu=0$. From the local
linear representation of $\M$, we can consider $\Z$ on the linear
space $V$
\begin{displaymath}
\Z(h,a,b) := \Z(P(h),a,b) = \Z\big|_{\M} ({\tilde z},a,b) \qquad h\in
V\cap\W\ ,
\end{displaymath}
which is at least $\C^2(V,\RR)$. We look for a map
\begin{equation}
\label{e.g}
g:(a,b)\in[0,c^*)\times[0,c^*)\mapsto h=g(a,b)\in V\ ,\qquad g(0,0)=0\ ,
\end{equation}
such that $\Z'_h(g(a,b),a,b) =0$; we already know that $\Z'_h(0,0,0) =
0$.

We set the operator $F$
\begin{equation}
\label{e.F}
F(h,a,b) := \Z_h'(h,a,b) = \Z_\psi'(P(h),a,b)P'(h)\ ,
\end{equation}
which, due to Lemma \ref{l.cont.pol}, is $\C^1$ from $V\times\RR^2$ to
$V^*:=L(V,\RR)$ and satisfies
$F(0,0,0)=\Z_\psi'(\tilde\psi_0,0,0)\Id=0$.

The differential $F_h'(0,0,0) = \Z_\psi''(\tilde\psi_0,0,0)\Id$, which
maps $V$ to its dual $V^*$, has the inverse bounded by the constant
$1/C_{h_1} \rho^2$; indeed, we already observed {in subsection~3.1.1}
that, {when $a=b=0$}, the whole orbit generated by $\tilde\psi_0$ is a
constrained strong extremizer, hence the constrained Hessian is
coercive in all the directions transverse to
$X_{H_\Omega}(\tilde\psi_0)$, with coercivity constant $2 C_{h_1}
\rho^2$ in the euclidean norm. A direct computation, which is based on
  the explicit computations developed in subsection~3.1.1, shows
indeed that one has
\begin{equation}
\label{e.coerc}
|\Z_\psi''(\tilde\psi_0,0,0)\Id\quadr{Y,Y}| \geq C_2
\sum_{j\not=0}Y_j^2 \geq {C_2}\norm{Y}^2\ ,\qquad C_2 := 2C_{h_1}
\rho^2\ ,
\end{equation}
hence
\begin{equation}
\label{e.inv.F1}
\norm{\quadr{F_h'(0,0,0)}^{-1}}_{L(V^*,V)} \leq 1/C_2\ .
\end{equation}

Then there exist $\mu_2^*(\rho)$, and hence from \eqref{e.mu} a
$c_2^*=c_2^*(\rho)$, and a function
$g\in\C^1([0,c_2^*)\times[0,c_2^*),V)$ as in \eqref{e.g} such that
    $F(g(a,b),a,b)=0$ with
\begin{displaymath}
\norm{g(a,b)}< \mu\ ,\qquad\qquad |\mu|<\mu_2^*\ .
\end{displaymath}
Furthermore, we recall that the IFT is based on the contraction
Theorem on a closed $\eps$-ball $B_\eps\subset V$ for the operator
$A_{a,b}(h):= h - \quadr{F_h'(0,0,0)}^{-1}F(h,a,b):V\mapsto V$. The
requirements of being a contraction and surjective on $B_\eps$,
implies that $\mu_2^*$ is bounded by the coercive constant $C_2$ in
\eqref{e.inv.F1}.

From the property $f(h)=o(\norm{h})$ of the parametrization $P$, it
immediately follows that the solution $\tilde\psi_{a,b} := P(g(a,b))
=: G(a,b)$ is $\mu$ close to $\tilde\psi_0$
\begin{displaymath}
\norm{\tilde\psi_{a,b}-\tilde\psi_0} \leq \Cst\, \mu\ .
\end{displaymath}

\qed


\subsection{Proof of Proposition \ref{p.orb.stab}}

From the continuity of $\Z''$ we deduce that $\tilde\psi_{a,b}$ is
still a strong extremizer in the direction $V$, with a coercive
constant $C_\mu = \mathcal{O}(\rho^2)$.  Hence the orbit generated by
he flow of $X_{H_\Omega}(\psi)$ is orbitally Lyapunov stable with $\Z$
being the Lyapunov function (see \cite{BamN98,Bam96,Wei99,Wei86}).

In few words (inspired also by Section 8 of \cite{BamN98}, in
particular Lemmas~8.5 and 8.6, although we work in the simplified case
of a finite dimensional phase space), given a generic point of the
orbit $\tilde\eta\in\Orb(\tilde\psi_{a,b})$, there exists a
neighborhood $\W_0$ of $\tilde\eta$ where a suitable set of
coordinates can be introduced. This local representation is based on
the decomposition $P_{\tilde\eta} = \nabla H_\Omega(\tilde\eta) \oplus
V_{\tilde\eta}$ of the hyperplane $P_{\tilde\eta}$ orthogonal to
$X_{H_\Omega}(\tilde\eta)$, for any
$\tilde\eta\in\Orb(\tilde\psi_{a,b})$. More precisely, there exists a
(tubular) neighborhood $\W_0$ of $\tilde\eta$ such that, for any point
$\tilde z\in\W_0$, the hyperplane through $\tilde z$ and orthogonal to
$\Orb(\tilde\psi_{a,b})$ is unique. This plane intersects the periodic
orbit $\Orb(\tilde\psi_{a,b})$ in a point $\tilde\xi$, which can be
obtained as the evolution of $\tilde\eta$ at ``time'' $\varphi$ along
the flow of the periodic orbit. Hence, using the previous notation,
such a plane can be decomposed as $P_{\tilde\xi} = \nabla
H_\Omega(\tilde\xi) \oplus V_{\tilde\xi}$. This implies that $\tilde
z$ can be locally represented by the coordinates
\begin{equation}
  \label{e.loc.coord}
  \tilde z\equiv (\varphi,E,v)\in\RR\times\RR\times V_{\tilde\xi}\ ,
\end{equation}
where $E$ represents the displacement in the $\nabla
H_\Omega(\tilde\xi)$ direction and $v$ the displacement in the
$V_{\tilde\xi}$ direction(s).  Using these local coordinates in order
to represent $\tilde z= \tilde\phi(t)\in\W_0$, the orbital distance of
$\tilde\phi(t)$ from $\Orb(\tilde\psi_{a,b})$ is controlled in $\W_0$
by
\begin{equation}
  \label{e.dist.0}
  d(\tilde\phi(t),\Orb(\tilde\psi_{a,b})) \leq \inf_{w\in
    \Orb(\tilde\psi_{a,b})\cap \W_0} \norm{w-\tilde\phi(t)}\leq c_1 |E(t)| +
  c_2\norm{v(t)}\ ,
\end{equation}
with $c_{1,2}$ depending on $\W_0$.  The first term $|E(t)|$
represents the variation of
$|H_\Omega(\tilde\phi(t))-H_\Omega(\tilde\psi_{a,b})| =
|H_\Omega(\tilde\phi(t))-H_\Omega(\tilde\xi)|$: indeed, being $E(t)$
the coordinate associated to the direction $\nabla
H_\Omega(\tilde\xi)$, with $\tilde\xi\in\Orb(\tilde\psi_{a,b})$, it
controls the displacement orthogonal to $\Sph_{\tilde\xi}$. The second
term $\norm{v(t)}$ is instead related to the variation of
$|\Z(\tilde\phi(t)) - \Z(\tilde\psi_{a,b})| = |\Z(\tilde\phi(t)) -
\Z(\tilde\xi)|$, which controls the $V_{\tilde\xi}$ directions
transverse to the orbit, {\em provided $\norm{v(t)}$ is small
  enough}. Here enters the fact that any point
$\tilde\xi\in\Orb(\tilde\psi_{a,b})$ is a local extremizer for $\Z$
constrained to\footnote{One can define $\M_{\tilde\xi}$ as the
  submanifold tangent to $V_{\tilde\xi}$ as in \eqref{e.subman.M}.}
$\M_{\tilde\xi}$. Indeed, if we take a point $\tilde z\in
V_{\tilde\xi}$ close enough to $\tilde\xi$ (such that $\Z$ almost
coincides with its quadratic part), then a Taylor expansion gives
\begin{displaymath}
  \Z(\tilde z) - \Z(\tilde\xi) = \frac12
  \Z''(\tilde\xi)[\tilde z-\tilde\xi,\tilde
    z-\tilde\xi]+h.o.t\ ,
\end{displaymath}
which provides the bound
\begin{equation}
  \label{e.Z.control}
  \norm{\tilde z-\tilde\xi}^2 = \norm{v}^2\leq \frac3{C_\mu}|\Z(\tilde z) -
  \Z(\tilde\xi)|\ ,\qquad \norm{v}\ll C_\mu\sim \rho^2 \ .
\end{equation}
Thus there exists a neighborhood $\W_1\subset\W_0$ of $\tilde\eta$
such that if $\tilde\phi(t)\in\W_1$ then \eqref{e.dist.0} becomes
\begin{equation}
\label{e.dist.1}
d(\tilde\phi(t),\Orb(\tilde\psi_{a,b})) \leq
\sqrt{c_3|H_\Omega(\tilde\phi(t))-H_\Omega(\tilde\psi_{a,b})| +
  \frac{c_4}{C_\mu}|\Z(\tilde\phi(t)) - \Z(\tilde\psi_{a,b})|}\ .
\end{equation}
with $c_{3,4}$ depending on $\W_1$.  Since $\Orb(\tilde\psi_{a,b})$ is
compact (being homeomorphic to $S^1$), we can cover a whole
neighborhood $\U$ of this orbit with a finite collection (independent
of $N$) of local neighborhoods like $\W_1$ and set of coordinates like
\eqref{e.loc.coord}, such that \eqref{e.dist.1} holds true. Since both
$H_\Omega$ and $\Z$ are continuous (analytic, see Lemma
\ref{l.cont.pol}) constants of motion for $K$, the requirement of
staying in $\U$ is translated in a closeness condition for the initial
datum $\tilde\phi(0)$: there exists $\delta(\eps)$ such that
\begin{displaymath}
d(\tilde\phi(0),\Orb(\tilde\psi_{a,b}))<\delta \quad\Rightarrow\quad
\sqrt{c_3|H_\Omega(\tilde\phi(0))-H_\Omega(\tilde\psi_{a,b})| +
  \frac{c_4}{C_\mu}|\Z(\tilde\phi(0)) - \Z(\tilde\psi_{a,b})|}<\eps\ .
\end{displaymath}

This actually gives $d(\Orb(\tilde\phi),\Orb(\tilde\psi_{a,b}))<\eps$,
i.e. the orbit we aim to control is contained in the tubular
neighborhood of the breather $\tilde\psi_{a,b}$ for the normal form
$K$. To conclude the proof we also need the symmetric control, to
avoid that our orbit, despite being in $\U$, does not actually follow
the whole trajectory of the breather. Indeed, in full generality it
could happen that the orbit goes back and forth only in a section of
$\U$; or it could happen that such a neighborhood is not homotopic to
an $S^1$, e.g. it has an ``eight'' shape, and in that case the orbit
could use the ``connection'' as a shortcut to follow only a part of
the orbit without leaving $\U$. In our case these problems do not
arise: indeed the GdNLS breather is given by the action of
$e^{i\lambda t}$ on $\Sph$, i.e. it is a maximal circle on a sphere
whose radius is of order $\rho$. On the other hand, $\U$ is the
cartesian product of the breather and a disc of co-dimension one,
whose radius has to be of order $\eps$ which is constrained to be
smaller than $\rho^2$. As a first consequence $\U$ is necessarily
homotopic to an $S^1$. Moreover the component of the vector field
transverse to the disc is a small perturbation of the vector field in
the point of the breather orbit which lies in the disc itself. It is
thus not possible for any orbit in $\U$ to stop flowing along the
tubular neighborhood, and this happen in a time which is a small
perturbation of the period of the breather.

\noindent The above arguments allow us to get also
$d(\Orb(\tilde\psi_{a,b}),\Orb(\tilde\phi))<\eps$, and this concludes
the proof.

\qed

\paragraph{Acknowledgments: }
We warmly thank Dario Bambusi for suggesting to separate this
application from the normal form result~\cite{PalP14}, and for other
useful comments. This research is partially supported by MIUR-PRIN
program under project 2010 JJ4KPA (``Te\-orie geome\-triche e
anali\-tiche dei sistemi Hamilto\-niani in dimensioni finite e
infi\-nite'').


\begin{thebibliography}{10}

\bibitem{AmbP95}
{\sc A.~Ambrosetti and G.~Prodi}, {\em A primer of nonlinear analysis},
  Cambridge University Press, Cambridge, 1995.
\newblock Corrected reprint of the 1993 original.

\bibitem{Aub97}
{\sc S.~Aubry}, {\em Breathers in nonlinear lattices: existence, linear
  stability and quantization}, Phys. D, 103 (1997), pp.~201--250.
\newblock Lattice dynamics (Paris, 1995).

\bibitem{Bam96}
{\sc D.~Bambusi}, {\em Exponential stability of breathers in {H}amiltonian
  networks of weakly coupled oscillators}, Nonlinearity, 9 (1996),
  pp.~433--457.

\bibitem{Bam98}
\leavevmode\vrule height 2pt depth -1.6pt width 23pt, {\em On long time
  stability in {H}amiltonian perturbations of non-resonant linear {P}{D}{E}s},
  Nonlinearity, 12 (1999), pp.~823--850.

\bibitem{BamN98}
{\sc D.~Bambusi and N.~N. Nekhoroshev}, {\em A property of exponential
  stability in nonlinear wave equations near the fundamental linear mode},
  Phys. D, 122 (1998), pp.~73--104.

\bibitem{CarTCM06}
{\sc R.~Carretero-Gonz{\'a}lez, J.~D. Talley, C.~Chong, and B.~A. Malomed},
  {\em Multistable solitons in the cubic-quintic discrete nonlinear
  {S}chr\"odinger equation}, Phys. D, 216 (2006), pp.~77--89.

\bibitem{ChoCMK11}
{\sc C.~Chong, R.~Carretero-Gonz{\'a}lez, B.~A. Malomed, and P.~G. Kevrekidis},
  {\em Variational approximations in discrete nonlinear {S}chr\"odinger
  equations with next-nearest-neighbor couplings}, Phys. D, 240 (2011),
  pp.~1205--1212.

\bibitem{ChoP11}
{\sc C.~Chong and D.~E. Pelinovsky}, {\em Variational approximations of
  bifurcations of asymmetric solitons in cubic-quintic nonlinear
  {S}chr\"odinger lattices}, Discrete Contin. Dyn. Syst. Ser. S, 4 (2011),
  pp.~1019--1031.

\bibitem{DauPW92}
{\sc T.~Dauxois, M.~Peyrard, and C.~R. Willis}, {\em Localized breather-like
  solution in a discrete {K}lein-{G}ordon model and application to {DNA}},
  Phys. D, 57 (1992), pp.~267--282.

\bibitem{GioPP12}
{\sc A.~Giorgilli, S.~Paleari, and T.~Penati}, {\em Extensive adiabatic
  invariants for nonlinear chains}, J. Statist. Phys., 148 (2012),
  pp.~1106--1134.

\bibitem{GioPP13}
\leavevmode\vrule height 2pt depth -1.6pt width 23pt, {\em An extensive
  adiabatic invariant for the {K}lein-{G}ordon model in the thermodynamic
  limit}, Ann. Henri Poincar\'e, online first (2014).

\bibitem{JamKC13}
{\sc G.~James, P.~G. Kevrekidis, and J.~Cuevas}, {\em Breathers in oscillator
  chains with hertzian interactions}, Physica D: Nonlinear Phenomena, 251
  (2013), pp.~39 -- 59.

\bibitem{KarSKC13}
{\sc N.~Karachalios, B.~S\'anchez-Rey, P.~Kevrekidis, and J.~Cuevas}, {\em
  Breathers for the discrete nonlinear schr\"odinger equation with nonlinear
  hopping}, Journal of Nonlinear Science, 23 (2013), pp.~205--239.

\bibitem{KolF89}
{\sc A.~N. Kolmogorov and S.~V. Fomin}, {\em Elementy teorii funktsii i
  funktsionalnogo analiza}, ``Nauka'', Moscow, sixth~ed., 1989.
\newblock With a supplement, ``Banach algebras'', by V. M. Tikhomirov.

\bibitem{KouKCR13}
{\sc V.~Koukouloyannis, P.~G. Kevrekidis, J.~Cuevas, and V.~Rothos}, {\em
  Multibreathers in {K}lein-{G}ordon chains with interactions beyond nearest
  neighbors}, Phys. D, 242 (2013), pp.~16 -- 29.

\bibitem{MacA94}
{\sc R.~S. MacKay and S.~Aubry}, {\em Proof of existence of breathers for
  time-reversible or {H}amiltonian networks of weakly coupled oscillators},
  Nonlinearity, 7 (1994), pp.~1623--1643.

\bibitem{PalP14}
{\sc S.~Paleari and T.~Penati}, {\em An extensive resonant normal form for an
  arbitrarily large {K}lein-{G}ordon model}, Ann. Mat. Pura Appl. (4),  (to
  appear).
\newblock http://arxiv.org/abs/1404.2730.

\bibitem{PelS12}
{\sc D.~Pelinovsky and A.~Sakovich}, {\em Multi-site breathers in
  {K}lein-{G}ordon lattices: stability, resonances and bifurcations},
  Nonlinearity, 25 (2012), p.~3423.

\bibitem{PelKF05}
{\sc D.~E. Pelinovsky, P.~G. Kevrekidis, and D.~J. Frantzeskakis}, {\em
  Stability of discrete solitons in nonlinear {S}chr\"odinger lattices}, Phys.
  D, 212 (2005), pp.~1--19.

\bibitem{PP12}
{\sc T.~Penati and S.~Paleari}, {\em Breathers and {Q}-breathers: two sides of
  the same coin}, SIAM J. Appl. Dyn. Syst., 11 (2012), pp.~1--30.

\bibitem{PeyB89}
{\sc M.~Peyrard and A.~R. Bishop}, {\em Statistical mechanics of a nonlinear
  model for dna denaturation}, Phys. Rev. Lett., 62 (1989), pp.~2755--2758.

\bibitem{QinX07}
{\sc W.-X. Qin and X.~Xiao}, {\em Homoclinic orbits and localized solutions in
  nonlinear {S}chr\"odinger lattices}, Nonlinearity, 20 (2007), pp.~2305--2317.

\bibitem{RosS05}
{\sc P.~Rosenau and S.~Schochet}, {\em Compact and almost compact breathers: a
  bridge between an anharmonic lattice and its continuum limit}, Chaos, 15
  (2005), pp.~015111, 18.

\bibitem{TchR99}
{\sc P.~Tchofo~Dinda and M.~Remoissenet}, {\em Breather compactons in nonlinear
  klein-gordon systems}, Phys. Rev. E, 60 (1999), pp.~6218--6221.

\bibitem{Wei86}
{\sc M.~I. Weinstein}, {\em Lyapunov stability of ground states of nonlinear
  dispersive evolution equations}, Comm. Pure Appl. Math., 39 (1986),
  pp.~51--67.

\bibitem{Wei99}
\leavevmode\vrule height 2pt depth -1.6pt width 23pt, {\em Excitation
  thresholds for nonlinear localized modes on lattices}, Nonlinearity, 12
  (1999), pp.~673--691.

\bibitem{Yos12}
{\sc K.~Yoshimura}, {\em Stability of discrete breathers in nonlinear
  {K}lein-{G}ordon type lattices with pure anharmonic couplings}, J. Math.
  Phys., 53 (2012), pp.~102701, 20.

\end{thebibliography}

\def\cprime{$'$} \def\i{\ii}\def\cprime{$'$} \def\cprime{$'$}

\end{document}